\documentclass{article}

\usepackage{amsmath,amssymb,amsfonts,bbm,color,bm,nicefrac,
graphicx,enumerate}
\usepackage{amsthm}
\usepackage{epstopdf}
\usepackage{geometry}
\usepackage{caption}
\usepackage[toc,page,title]{appendix}
\usepackage{authblk}
\usepackage{float}

\geometry{letterpaper}

\newtheorem{thm}{Theorem}[section]
\newtheorem{lem}[thm]{Lemma}
\newtheorem{prop}[thm]{Proposition}

\newtheorem{definition}{Definition}[section]
\newtheorem{remark}[thm]{Remark}

\newcommand{\comment}[1]{}

\newcommand{\tmop}[1]{\ensuremath{\operatorname{#1}}}
\newcommand{\tmtextbf}[1]{{\bfseries{#1}}}
\newenvironment{enumeratealpha}{\begin{enumerate}[a{\textup{)}}] }{\end{enumerate}}

\begin{document}
\nocite{*}
\title{Fixation to Consensus on Tree-related Graphs}

\author{Sinziana M. Eckner$^{1}$,
       Charles M. Newman$^{2,3}$
       \\
       $^1$ Courant Institute of Mathematical Sciences, New York,
            NY 10012, USA. \\
       $^2$ Courant Institute of Mathematical Sciences and NYU--Shanghai,
            New York, NY 10012, USA. \\
       $^3$ Department of Mathematics, University of California,
            Irvine, CA 92697, USA.
       }
\maketitle

\begin{abstract}
\noindent
We study a continuous time Markov process whose state space consists of an
assignment of $+1$ or $-1$ to each vertex of a graph $G$.
The graphs that we treat are related to homogeneous trees of degree $K\geq3$,
such as finite or infinite stacks of such trees. The initial spin
configuration is chosen from a Bernoulli product measure with
density $\theta$ of $+1$ spins. The system evolves according to an agreement
inducing dynamics: each vertex, at rate 1, changes its spin value to agree with
the majority of its neighbors. We study the long time behavior of this system
and prove that, if $\theta$ is close enough to 1, the system reaches fixation to
consensus. The geometric percolation-type arguments introduced here may be of
independent interest.
\end{abstract}

\section{Introduction}\label{sec:Introduction}

\indent
In this work we study the long term behavior of continuous time
Markov processes whose
states assign either $+1$ or $-1$ (usually called a spin value) to each
vertex $x$ in a graph $G$.
The graphs $G$ we consider are related to homogeneous
trees of degree $K$ and include infinite stacks of homogeneous trees. These
graphs will be specified in Section \ref{sec:TheModel}, where we will also discuss 
some earlier papers where such stacks of
trees have been studied.
The geometric and percolation theoretic methods we introduce
to carry out our analysis (see especially
Section~\ref{sec:Main} and Appendix~\ref{app:Geometric})
are potentially of independent interest.

We denote by $\sigma_x (t)$
the value of the spin
at vertex $x \in G $ at time $t \geq 0$. Starting from a random initial configuration
$\sigma (0) =\{\sigma_x (0)\}_{x \in G}$ drawn from the independent Bernoulli
product measure

\begin{equation}\label{mu}
 \mu_{\theta} (\sigma_x (0) = + 1) = \theta = 1 - \mu_{\theta} (\sigma_x (0)
   = - 1),
\end{equation}

\noindent
the system then evolves in continuous time according to an agreement inducing
dynamics: at rate 1, each vertex changes its value if it disagrees with more than half of
its neighbors, and tosses a fair coin in the event of a tie.
Our arguments and results easily extend to many other types of dynamics,
as discussed in Remark~\ref{remark1.3} below; these include processes in discrete
time, as in~\cite{KM}, and different rules for tie-breaking.

Our process corresponds to the zero-temperature limit of Glauber dynamics
for a stochastic Ising model with ferromagnetic nearest neighbor interactions
and no external magnetic field (see, e.g., \cite{NNS} or \cite{KRB}).
This process has been studied extensively in the physical and mathematical
literature -- primarily on graphs such as the hyper-lattice $\mathbbm{Z}^d$ and
the homogeneous tree of degree $K$, $\mathbbm{T}_K$.
A physical motivation
is the behavior of a magnetic system following the extreme case of a deep quench,
i.e., when a system has reached equilibrium at infinite temperature
and is instantaneously reduced to zero temperature. For references on
this and related problems see, e.g., \cite{NNS} or \cite{KRB}.
The main focus in the study of this model is the formation and evolution of
boundaries delimiting same spin cluster domains:
these domains shrink or grow or split or coalesce
as their boundaries evolve. An interesting question is whether the system
has a limiting configuration, or equivalently does every vertex
eventually stop flipping? Whether

\begin{equation}\label{sigmainfinity}
\lim_{t\rightarrow\infty}\sigma_x(t)
\end{equation}

\noindent
exists for almost every initial configuration, realization of the dynamics and
for all $x\in G$ in the underlying graph depends on $\theta$
and on the structure of the underlying
graph~$G$.
Nanda, Newman and Stein \cite{NNS}
investigated this question when $G= \mathbbm{Z}^2$ and $\theta=\frac{1}{2}$
and found that in this case the limit does not exist, i.e., every vertex
flips forever. Their work extended an old result
of Arratia~\cite{A}, who showed the same on $\mathbbm{Z}$
for $\theta\neq 0$ or $1$.
One important consequence of the methods of \cite{NNS} is that
$\sigma_x (\infty)$ does exist for almost every initial configuration,
realization of the dynamics and every $x\in G$ if the graph is such that
every vertex has an odd number of neighbors,
such as for example $\mathbb{T}_K$ for $K$ odd.

Another question of interest is whether sufficient bias in the initial
configuration leads the system to reach consensus in the limit. I.e., does
there exist $\theta_{\ast}\in(0, 1)$, such that for $\theta \geq \theta_{\ast}$,

\begin{equation}\label{fixation}
\forall x \in G, \mathbbm{P}_{\theta} (\exists T = T (\sigma (0), \omega, x) < \infty
\text{ so that } \sigma_x (t) = + 1 \text{ for } t \geq T) = 1.
\end{equation}

\noindent
We will refer to \eqref{fixation} as \textbf{fixation to consensus} (of $+1$).
Kanoria and Montanari~\cite{KM} studied fixation to consensus on homogeneous
trees of degree $K \geq 3$ for a process with synchronous time dynamics.
Their process has the same update rules as ours, except that all vertices
update simultaneously and at integer times $t\in \mathbb{N}$.
For each $K$, Kanoria and Montanari defined the consensus threshold
$\rho_{\ast}(K)$ to be the smallest bias in $\rho=2\theta-1$ such that the
dynamics converges to the all $+1$ configuration, and proved upper and lower
bounds for $\rho_{\ast}$ as a function of $K$. Fixation to consensus
was also investigated on $\mathbbm{Z}^d$ for the asynchronous dynamics model.
It was conjectured by
Liggett \cite{L} that fixation to consensus holds there for all $\theta > \frac{1}{2}$.
Fontes, Schonmann and Sidoravicius~\cite{FSS} proved consensus
for all $d\geq 2$ with $\theta_{\ast}$ strictly less than but very close to 1
and Morris~\cite{M} proved that $\theta_{\ast} (d) \to 1/2$ as $d \to \infty$.

In \cite{H} Howard investigated the asynchronous dynamics in detail
on $\mathbbm{T}_3$ and showed how fixation takes place. On this tree graph,
vertices fixate in spin chains
(defined as doubly infinite paths of vertices of the same spin sign).
Though no spin chains are present at time 0 when $\theta = 1/2$, Howard showed
that for any
$\epsilon >0$, there are (almost surely) infinitely many distinct $+1$ and $-1$
spin chains at time $\epsilon$. He also showed the existence of a phase
transition in $\theta$: there exists a critical $\theta_c \in(0,\frac{1}{2})$
such that if $\theta <\theta_c$, $+1$ spin chains do not form almost surely,
whereas if $\theta>\theta_c$ they almost surely form in finite time.
Our work is motivated by that of Howard, but for more general tree-related~graphs.

\section{Statements of Theorems}\label{sec:TheModel}

Let $S^\infty$ denote a doubly infinite stack of homogeneous trees of degree
$K\geq 3$, i.e., the graph with vertex set $\mathbb{T}_K \times \mathbb{Z}$
and edge set specified below.
The main focus of this paper is proving fixation to consensus on $S^\infty$
for the process started with an independent
identically distributed initial configuration of parameter $\theta$.
Such infinite stacks of trees have been studied before in the context of
Bernoulli percolation \cite{GN} and Ising models \cite{NW}. More general nonamenable
graphs have also been studied --- see, e.g., \cite{l}. One motivation for these
studies is that as the parameter of the model varies, the behavior is
sometimes like that on a simple homogeneous tree and sometimes like that
on a simple amenable graph like ${\mathbb Z}^d$.

We express $S^{\infty}$ as

\begin{equation}\label{Sinfinity}
  S^{\infty} = \bigcup_{i = - \infty}^{\infty} S_i,
\end{equation}

\noindent
where $S_i =\mathbbm{T}_K \times \{i\}=\{(u, i) : u \in \mathbbm{T}_K, i\in \mathbb{Z} \}$,
and think of this as a decomposition of the
infinite stack $S^\infty$ into layers $S_i$. Let the edge set of $S^\infty$,
$\mathbb{E}^\infty$, be such that any two vertices $x, y\in S^\infty$ are connected by an
edge $e_{x y} \in \mathbb{E}^\infty$ if and only if:

\begin{enumerate}[i]
 \item  $x=(u_x,k), y=(v_y,k) \in S_k$ for some $k$, and the corresponding $u_x$ and $v_y$
are adjacent vertices in $\mathbb{T}_K$; or
 \item $x=(u_x, k)$ for some $k$ and $y=(u_x, k+1)$; or
\item $x=(u_x, k)$ for some $k$ and $y=(u_x, k-1)$.
\end{enumerate}

For a more detailed description of the Markov process than the one given in Section \ref{sec:Introduction},
we associate to each vertex $x\in S^{\infty}$ a rate~1 Poisson process whose arrival times
we think of as a sequence of clock rings at $x$. We will denote the arrival
times of these Poisson processes by $\{ \tau_{x, n}\}_{n=1, 2, \ldots}$
and take the Poisson processes associated to different
vertices to be mutually independent. We associate to the $(x,n)$'s independent
Bernoulli$(1/2)$ random variables with values $+1$ or $-1$,
which will represent the fair coin tosses to be used in the event of a tie.
Let $\mathbbm{P}_{\tmop{dyn}}$ be the probability measure for the
realization of the dynamics (clock rings and tie-breaking
coin tosses), and denote by $\mathbbm{P}_\theta=
\mu_{\theta} \times \mathbbm{P}_{\tmop{dyn}}$ the joint probability measure on
the space $\Omega$ of initial configurations $\sigma (0)$ and realizations of
the dynamics; an element of $\Omega$ will be denoted~$\omega$.

The main result of this paper is the following theorem, which shows fixation
to consensus for nontrivial $\theta$; its proof is given in
Section~\ref{sec:Main}.
Unlike Kanoria and Montanari \cite{KM}, here we do not attempt to
obtain good
upper bounds on $\theta_{\ast}$ though we expect $\theta_{\ast}$ to
approach $1/2$ with increasing degree $K$. We restrict ourselves to proving
fixation to $+1$ for $\theta$
close enough to 1 with the standard majority update rule: when its clock rings,
each vertex updates to agree with the majority of its neighbors or tosses
a fair coin in the event of a tie. See Remark~\ref{remark1.3} below for other update
rules to which our arguments and results apply.

\begin{thm}\label{theorem1}
  Given $K\geq 3$, there exists $\theta_{\ast} < 1$ such that for $\theta >
  \theta_{\ast}$ the process on $S^\infty$  fixates to consensus.
\end{thm}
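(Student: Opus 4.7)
The plan is a two-part argument combining a static, percolation-theoretic statement about the initial configuration with a dynamic absorption statement for the resulting finite $-1$ pockets: first identify a large subset of $S^{\infty}$ that is permanently pinned to $+1$, then show that the remaining finite $-1$ regions get consumed by the surrounding $+1$ sea.

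I would introduce the notion of a \emph{self-supporting} set $A \subset S^{\infty}$: since every vertex of $S^{\infty}$ has degree $K+2$, call $A$ self-supporting if every $x \in A$ has at least $\lceil (K+3)/2 \rceil$ of its $S^{\infty}$-neighbors also in $A$, i.e.\ a strict majority. If $\sigma_{x}(0) = +1$ for all $x \in A$, then a straightforward induction on the sequence of clock rings gives $\sigma_{x}(t) = +1$ for all $x \in A$ and all $t \ge 0$, regardless of tie-breaking: when the clock of $x \in A$ rings, the inductive hypothesis says all its self-supporting neighbors are still $+1$, so $x$ has a strict majority of $+1$ neighbors and the majority rule keeps $x$ at $+1$.

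Next I would produce such an $A$ with only small complement clusters when $\theta$ is close to $1$, via a block argument. Tile $S^{\infty}$ by finite blocks adapted to the mixed tree/$\mathbb{Z}$ geometry and call a block \emph{good} if its initial configuration contributes a self-supporting piece to a globally defined $A$. Since the initial spins are i.i.d.\ and blocks are finite, the good-block events dominate a Bernoulli site percolation on the block lattice whose density tends to $1$ as $\theta \to 1$. A Peierls-type estimate then yields that, almost surely, connected clusters of bad blocks are finite. For each finite cluster $R$ of bad blocks, the configuration on $R$ evolves surrounded by the pinned $+1$ set: the boundary vertices of $R$ see most of their neighbors locked to $+1$, so they update to $+1$ the next time their clock rings, and iterating inward absorbs $R$ entirely in finite time almost surely.

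The main obstacle is the Peierls-type estimate on $S^{\infty}$, which is non-amenable in its tree directions. Already on $\mathbb{T}_{K}$ the boundary of a finite set is comparable to its volume, so the classical perimeter-vs-volume argument on $\mathbb{Z}^{d}$ does not transplant. Overcoming this needs a careful choice of block shape combined with a sharp combinatorial count of connected bad-block animals on $S^{\infty}$ that beats the exponential growth inherited from the tree; this appears to be the content promised in Appendix~\ref{app:Geometric}. A secondary subtlety is defining the good-block event so that it is local enough for distinct blocks to be (nearly) independent, yet rich enough that good blocks stitch together on overlaps into a genuine self-supporting global $A$.
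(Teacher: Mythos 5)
Your overall architecture --- a permanently pinned $+1$ set whose complement has only finite clusters, followed by absorption of those clusters --- is exactly the paper's, and your ``self-supporting set'' observation is the same stability argument used for $\mathcal{T}$ in Section~\ref{sec:Preliminaries}. The absorption step is also essentially the paper's (made precise there by a longest-path argument: the endpoint of a longest path in a finite $-1$ component has $K+1$ neighbors outside the component, hence flips and fixates, and a contradiction on the limiting set finishes). But the heart of the proof is the construction of the pinned set together with the exponential estimate $\mathbb{P}(\Lambda\cap\mathcal{T}=\oslash)\le\lambda^{|\Lambda|}$, and here your proposal has a genuine gap that your own ``secondary subtlety'' remark understates. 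On a tree no finite set can be self-supporting: every finite subtree has leaves, and a leaf has only one neighbor inside. So a finite block cannot ``contribute a self-supporting piece'' on its own; whether the piece it would contribute actually belongs to the global set $A$ depends on whether that piece continues through neighboring blocks, and through their neighbors, ad infinitum. Consequently the good-block event is not local, the comparison with high-density Bernoulli site percolation on the block lattice is not justified, and a single bad block can in principle trigger a cascade that deletes the contributions of infinitely many good blocks. This stitching problem is the central difficulty, not a side issue, and the proposal gives no mechanism to resolve it.

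The paper's route differs in precisely this respect and avoids blocks altogether. The pinned set $\mathcal{T}_i$ in each layer is defined globally as the union of all \emph{infinite} $(K-1)$-ary monochromatic $+1$ subtrees of that layer (Definition~\ref{T+lS}); for $K\ge5$ each of its vertices automatically has $K-1>\frac{K+2}{2}$ pinned neighbors. The quantitative input is Proposition~\ref{PropIndept}: for any finite $\Lambda$ in a layer, $\mu_\theta(\Lambda\cap\mathcal{T}^{+,K-1}=\oslash)\le\lambda^{|\Lambda|}$ with $\lambda\to0$ as $\theta\to1$. Its proof takes the minimal spanning tree $T$ of $\Lambda$ and, via the combinatorial Lemma~\ref{geometriclemma}, extracts order $|\Lambda|$ disjoint witness regions (hanging off leaves of $T$, or off good special 2-points), in each of which an infinite $(K-1)$-ary $+1$ tree occurs independently with probability tending to $1$ (a Galton--Watson survival estimate, Appendix~\ref{app:GaltonWatson}). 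Note that Appendix~\ref{app:Geometric} contains this tree-combinatorics lemma, not the Peierls-type bound you anticipated; and the finiteness of the complement's clusters (Proposition~\ref{finitecomponents}) is a plain first-moment count over self-avoiding paths ($\rho(N)\le(K+2)(K+1)^{N-1}$ against $\lambda^N$), so non-amenability causes no trouble and your worry about perimeter-versus-volume is moot because no contour argument is used. Finally, your proposal does not address $K=3,4$, where a single-layer construction cannot yield a strict majority and the paper must pair layers and use doubly open sites. As it stands, the proposal identifies the right skeleton but leaves the main quantitative step unproved, and the specific route it suggests for that step would not work without substantially new ideas.
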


The same fixation to consensus result holds for the following graphs,
as stated in Theorem~\ref{theorem2} below, whose proof is also given
in Section~\ref{sec:Main}:

\begin{itemize}
 \item Homogeneous trees $\mathbb{T}_K$ of degree $K\geq 3$.

 \item Finite width stacks of homogeneous trees of degree $K\geq 3$ with free
or periodic boundary conditions. These are graphs, which we will denote
by $S^l_f$ or $S^l_p$, with vertex set
$\mathbb{T}_K \times \{0, 1, \ldots, l-1\}$ and edge set $\mathbb{E}_f$ or
$\mathbb{E}_p$. $\mathbb{E}_f$ and $\mathbb{E}_p$ are defined similarly
to the edge set $\mathbb{E}^\infty$ of $S^\infty$: two vertices $x, y\in S^l_f$ are
connected by an edge $e_{x y} \in \mathbb{E}_f$ if and only if either condition~i
above holds; or

\begin{enumerate}
 \item $x, y\in S_k$ for $1\leq k \leq l-2$ and either condition ii or iii holds; or
 \item $x=(u_x, 0)$ and $y=(u_x, 1)$; or
 \item $x=(u_x, l-1)$ and $y=(u_x, l-2)$.
\end{enumerate}

Any two vertices $x, y\in S^l_p$ are
connected by an edge $e_{x y} \in \mathbb{E}_p$ if and only if either condition
i holds; or

\begin{enumerate}
\setcounter{enumi}{3}
 \item $x, y\in S_k$ for $1\leq k \leq l-2$ and either condition ii or iii holds; or
 \item $x=(u_x, 0)$ and $y=(u_x, 1)$ or $y=(u_x, l-1)$; or
 \item $x=(u_x, l-1)$ and $y=(u_x, l-2)$ or $y=(u_x, 0)$.
\end{enumerate}

 \item Semi-infinite stacks of homogeneous trees of degree $K\geq 3$ with
free boundary conditions. These are graphs, which we will denoted by
$S^{\text{semi}}$, with vertex set $\mathbb{T}_K \times \{0, 1, \ldots\}$ and
edge set $\mathbb{E}^{\text{semi}}$. Two vertices $x, y\in S^{\text{semi}}$ are
connected by an edge $e_{x y} \in \mathbb{E}^{\text{semi}}$ if and only either
condition i holds; or

\begin{enumerate}
\setcounter{enumi}{7}
 \item $x, y\in S_k$ for $1\leq k$ and  either condition ii or iii holds; or
 \item $x=(u_x, 0)$ and $y=(u_x, 1)$.
\end{enumerate}
\end{itemize}

\begin{thm}\label{theorem2}
  Fix $K\geq 3$ and $l\geq 2$ and let $G$ be one of the following graphs:
  $\mathbb{T}_K$, $S^l_f$, $S^l_p$ or $S^{\text{semi}}$. There exists
  $\theta_{\ast} < 1$ such that for $\theta >
  \theta_{\ast}$ the process on $G$ fixates to consensus.
\end{thm}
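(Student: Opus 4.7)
The plan is to show that the geometric/percolation argument used to prove Theorem~\ref{theorem1} on $S^{\infty}$ adapts to each of the four graphs in Theorem~\ref{theorem2} with only minor modifications, because the argument is local in nature: it controls the evolution of clusters of $-1$ spins from local degree and neighborhood information, exploiting only that initial $-1$ spins are sparse when $\theta$ is close to $1$. The first step is to revisit the proof of Theorem~\ref{theorem1} and isolate its purely local hypotheses---vertex degrees, local tree structure, absence of short cycles---so as to pinpoint exactly what must be verified on each of the new graphs.

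For $\mathbb{T}_K$, each vertex has $K$ neighbors rather than the $K+2$ of $S^{\infty}$; since a strict majority of $K$ is no larger than a strict majority of $K+2$, fixation to $+1$ is if anything easier, and the argument of Theorem~\ref{theorem1} carries over in a simpler form with no vertical edges to track. For $S^l_f$ and $S^{\text{semi}}$, the interior-layer vertices have degree $K+2$, exactly as in $S^{\infty}$, and only the top (and, for $S^l_f$, bottom) boundary vertices are affected: they have degree $K+1$, which can only make a $+1$ vertex harder to flip. The local geometric constructions from the proof of Theorem~\ref{theorem1} therefore remain valid at such boundary vertices; one need only check that the percolation estimates near the boundary are at least as good as those in the bulk, which I expect to be a routine modification.

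For $S^l_p$, every vertex again has degree $K+2$ as in $S^{\infty}$, and its local neighborhood out to depth strictly less than $l/2$ is graph-isomorphic to that of a vertex of $S^{\infty}$. Although one might hope to transfer the result via the natural projection $\pi:S^{\infty}\to S^l_p$ given by $(u,k)\mapsto(u,k\bmod l)$, the $l$-periodic extension of an i.i.d.\ configuration on $S^l_p$ is not i.i.d.\ on $S^{\infty}$, so Theorem~\ref{theorem1} does not apply directly under this lift; it is cleaner to rerun the proof of Theorem~\ref{theorem1} on $S^l_p$ itself. The main obstacle I anticipate across all four cases is verifying that no step of the geometric/percolation construction in Section~\ref{sec:Main} implicitly relies on the graph being infinite or bi-infinite in the vertical direction; if such a step appears, I would absorb the finite vertical extent by tuning the percolation constants so that the relevant bad events, summed over the finitely many distinct vertical levels, remain negligible as $\theta\to 1$.
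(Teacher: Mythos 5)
Your overall strategy --- rerun the argument of Theorem~\ref{theorem1} on each graph $G$ after isolating its local hypotheses --- is indeed the paper's strategy: the paper simply redefines the stable set $\mathcal{T}_G$ for each $G$ and reruns the analogue of Proposition~\ref{finitecomponents}. Your observations that on $\mathbb{T}_K$ the degree is $K$ rather than $K+2$ (so the $(K-1)$-ary $+1$ trees are stable for every $K\geq 3$ with no extra work) and that the degree-$(K+1)$ boundary vertices of $S^l_f$ and $S^{\text{semi}}$ can only help stability are both correct and consistent with the paper.

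The genuine gap is that you never confront the case $K=3,4$ on the stacked graphs, and for those values of $K$ your ``minor modifications'' claim fails in one concrete place. Recall that for $K=3,4$ the proof of Theorem~\ref{theorem1} cannot use the single-layer sets $\mathcal{T}_i^{+,K-1}$ (a vertex there has only $K-1$ of its $K+2$ neighbors at $+1$, which is not a strict majority); it instead pairs consecutive layers into $\tilde S_i=\mathbb{T}_K\times\{2i,2i+1\}$ and builds the stable set out of \emph{doubly open} sites. On $S^l_f$ or $S^l_p$ with $l$ odd this pairing is impossible: one layer is always left over, and a vertex of a $(K-1)$-ary $+1$ tree sitting in an unpaired layer has only $K-1$ of its $K+1$ or $K+2$ neighbors at $+1$, which is a tie or a minority when $K=3$ or $4$. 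This is a deterministic stability failure, not a probabilistic one, so it cannot be absorbed by ``tuning the percolation constants'' over the finitely many vertical levels as you propose. The paper resolves it by pairing off the first $l-3$ layers and defining, on the remaining three layers, a stable set $\tilde{\tilde{\mathcal{T}}}^{+,K-1}$ built from \emph{triply open} sites (the analogue of doubly open across a triple of layers); this is the one new construction your proposal is missing. With that addition, the rest of your outline --- including rerunning the argument directly on $S^l_p$ rather than attempting a lift to $S^\infty$ --- goes through as in the paper.
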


\begin{remark}\label{remark1.3}
Our results have natural extensions to other dynamics.
Let $N_0$ be the maximum
number of neighbors of a vertex in the graph $G$ where $G$ is 
any of the graphs of Theorem~\ref{theorem2}; for some $M_0>\frac{N_0}{2}$,
we can change (arbitrarily) the update rules for those vertices whose number of
$+1$ neighbors is strictly less than $M_0$, and the conclusions of
Theorem \ref{theorem1} or~\ref{theorem2} remain valid with the same proof.
For large
$N_0$, $M_0$ can be taken much larger than $\frac{N_0}{2}$. For example, on the
infinite stack $S^\infty$ of $K$-trees, $N_0=K+2$ and for $K\geq 5$, one can take
$M_0=K-1=N_0-3$ (as is readily seen from the proof of Theorem \ref{theorem1}).
A special case of this type of extension of our results is to modify the update
rule in the event of a tie: e.g., instead of flipping a fair coin, flip a biased
coin with any bias $p\in [0, 1]$ or do nothing.
We can also change from
two-valued spins to any fixed number $q$ of spin values, say $1, 2, \ldots, q$.
The initial configuration is given by the measure
$ \nu (x \text{ is assigned color } i \text{ at time } 0)= \epsilon_i$
where $i\in \{1, \ldots, q\}$ and $\sum_i \epsilon_i =1$ and the updating is done
via a majority rule, e.g., by a rule that respects majority agreement of neighbors on, say,
color 1. We can then think of color $1$ as the $+1$ spin from before,
and the other $q-1$ colors together representing the $-1$ spin.
If $\epsilon_1$ is close enough to 1, we again obtain fixation to $+1$ consensus.
All our results also apply to the synchronous dynamics of~\cite{KM}.
\end{remark}

\section{Preliminaries}\label{sec:Preliminaries}
In order to prove Theorem~\ref{theorem1} we will show that if we take $\theta$ close enough to 1,
then already at time 0 there are stable structures of $+1$ vertices, which are fixed
for all time. We will choose these structures
to be subsets (denoted $\mathcal{T}_i$)
of the layers $S_i$ in the decomposition of $S^\infty$ such that they
are stable with respect to the dynamics. We will define a set $\mathcal{T}$ as the
union of $\mathcal{T}_i$ for all $i$, and show that for $\theta$ close enough to 1,
the complement of $\mathcal{T}$ is a union of almost surely finite components.

\subsection{A Set of Fixed Vertices in $S^{\infty}$}

\begin{definition}\label{T+lS}
  For $i$ fixed, let $\mathcal{T}_i^{+, l} (t)$ be the union of all subgraphs
  $H$ of $S_i$ that are isomorphic to $\mathbb{T}_l$ with $\sigma_x (t) = +
  1, \forall x \in H$.
\end{definition}

We point out that $\mathcal{T}_i^{+, K - 1} (t)$ is stable for $K\geq 5$,
since every $x \in \mathcal{T}_i^{+, K - 1} (t)$ has $K - 1$ out of $K + 2$
neighbors of spin $+ 1$
and $K-1 > \frac{K+2}{2}$ for $K \geq 5$. Not only is this set stable
with respect to the dynamics on $S^\infty$ as in Theorem \ref{theorem1},
but it's also stable with respect to the dynamics on $S_i$ and the other graphs
of Theorem \ref{theorem2}. Let $\mathcal{T}$ represent the union
of $\mathcal{T}_i^{+, K - 1} (0)$ across all levels $S_i$,~i.e.,

\begin{equation}
\mathcal{T} = \bigcup_{j = - \infty}^{\infty} \mathcal{T}_i,
\end{equation}

\noindent
where for shorthand notation, $\mathcal{T}_i = \mathcal{T}_i^{+, K - 1} (0)$.

If $K\leq 4$, $\mathcal{T}$ as defined above is not stable with respect to
the dynamics. In these cases
the argument will be changed somewhat as discussed in Section~\ref{sec:Main}.

\subsection{Asymmetric Site Percolation on $\mathbbm{T}_K$}
The goal of this subsection is to state and prove a geometric probability estimate,
Proposition \ref{PropIndept}, which concerns
asymmetric site percolation on $\mathbbm{T}_K$ distributed according to the product
measure $\mu_\theta$ with:

\begin{equation}
\mu_{\theta} (\sigma_x = + 1) = \theta = 1 - \mu_{\theta} (\sigma_x = - 1),
   \forall x \in \mathbbm{T}_K.
\end{equation}

\noindent
This equals the distribution of $\sigma(0, \omega)$ restricted to the
layers $S_i$, and
therefore applies to these graphs as well.
The statement and proof of Proposition~\ref{PropIndept} require a series of definitions.
The first of these defines graphical subsets of $\mathbb{T}_K$, whereas
the second concerns probabilistic events for subgraphs of $\mathbb{T}_K$
that have a specific orientation. Later, in the proof of Theorem \ref{theorem1} which is
given in Section~\ref{sec:Main}, Proposition~\ref{PropIndept}
will be applied to certain subsets of $S_i$.

\begin{definition}
\label{TreeGraphs}
\emph{\bf{Certain rooted subtrees of $\mathbb{T}_K$}}
  \noindent
  Let $x, y$ in $\mathbb{T}_K$ be two adjacent vertices, and denote by $A_y
  [x]$ the connected component of $x$ in $\mathbb{T}_K \backslash \{y\}$
  -- see Figure~\ref{fig:mainpart1}.

  \noindent
  Let $x, y, z$ be three adjacent vertices in $\mathbb{T}_K$, such that $x$
  and $z$ are neighbors of $y$. Denote by $A_{x, z} [y]$ the connected
  component of $y$ in $\mathbb{T}_K \setminus \{x \cup z\}$ -- see
  Figure~\ref{fig:mainpart2}.

\end{definition}

\begin{figure}[H]
\centering
\includegraphics{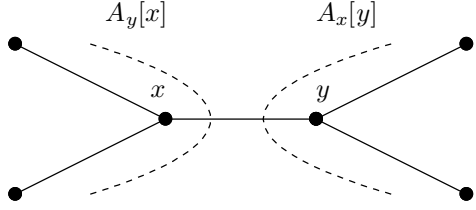}
\caption{$A_y[x]$ and $A_x[y]$ are tree graphs whose roots have coordination number~$K- ~1$}
\label{fig:mainpart1}
\end{figure}

\begin{figure}[H]
\centering
\includegraphics{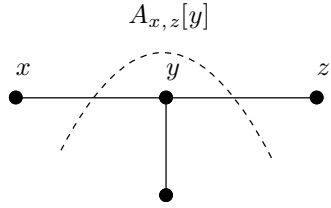}
\caption{$A_{x, z}[y]$ is a tree graph whose root has coordination number $K-2$}\
\label{fig:mainpart2}
\end{figure}

\begin{definition}
\label{K-1-arytrees}
\emph{\bf{Random $(K - 1)$-ary trees of spin $+1$ with a certain \\ orientation}}

  \noindent
  Let $T$ be a deterministic subtree of $\mathbb{T}_K$ with at least two vertices,
  and $v$ be a leaf of $T$; i.e., $v$ has a neighbor $v'$ in $T$ and $(K - 1)$
  neighbors in $\mathbb{T}_K \setminus T$. $\tmop{Tree}^+ [v]$ is the event
  that there is a subgraph $H$ of $A_{v'}[v]$ isomorphic to $\mathbb{T}_{K - 1}$
  and containing $v$, such that $\sigma_u = + 1, \forall u \in H$ --
  see Figure~\ref{fig:mainpart3}.
  \\
  \newline
  \noindent
  Let $T$ be a deterministic subtree of $\mathbb{T}_K$ with at least five vertices,
  and $v$ be a \textbf{2-point} of $T$ (i.e., a vertex of $T$ with exactly two neighbors
  in $T$) that is also  \textbf{good} (i.e., both its neighbors in $T$ are also 2-points
  of $T$). Let $v', w$ be the two neighbors of $v$ in $T$ and let $w'$ be $w$'s
  other neighbor in $T$. $\tmop{Tree}^+ [v, w]$ is the event that there is a
  subgraph $H$ of $A_{v',w} [v] \cup A_{v, w'} [w]$ isomorphic to
  $\mathbb{T}_{K - 1}$ and containing $v$ and $w$, such that
  $\sigma_u = + 1, \forall u \in H$; here
  $A_{v', w} [v] \cup A_{v, w'} [w]$ is the graph with vertex set
  $\mathbb{V}_{A_{v', w} [v]} \cup \mathbb{V}_{A_{v, w'} [w]}$ and edge set
  $\mathbb{E}^\infty_{A_{v', w} [v]} \cup \mathbb{E}^\infty_{A_{v, w'} [w]} \cup e_{v w}$
  ~--~see Figure~\ref{fig:mainpart4}.

\end{definition}

\begin{figure}[H]
\centering
\includegraphics{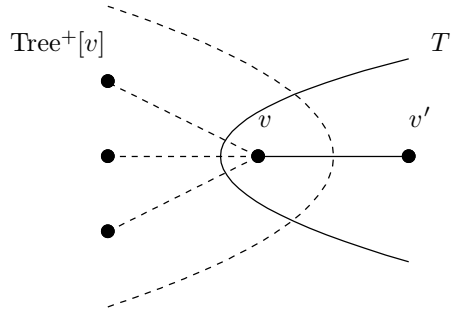}
\caption{The event $\text{Tree}^+[v]$ asserts the existence of a
 random $(K-1)$-ary tree of spin $+1$ that contains a leaf, $v$, of $T$}
\label{fig:mainpart3}
\end{figure}

\begin{figure}[H]
\centering
\includegraphics{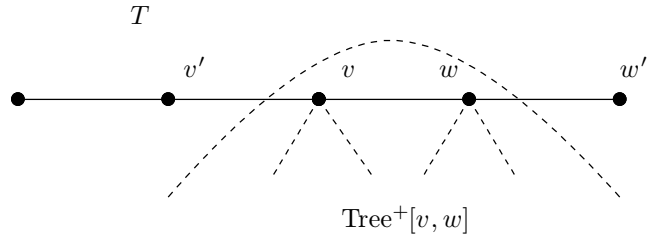}
\caption{The event $\text{Tree}^+[v, w]$ asserts the existence of a random $(K-1)$-ary tree of spin $+1$ that contains a good two 2-point,
 $v$, of $T$, and one of its neighbors, $w$}
\label{fig:mainpart4}
\end{figure}

For distinct leaves $v$ of $T$, the events $\{\tmop{Tree}^+ [v]\}_{v \in T}$
are defined on disjoint subsets of $\mathbbm{T}_K$, and are therefore
independent; they are also identically distributed. The same is true for
$\{\tmop{Tree}^+[v, w]\}_{v, w \in T}$ for disjoint pairs $\{v, w\}$.
The following is essentially the same as Definition \ref{T+lS}, with the only difference
being that here we define the graph $\mathcal{T}^{+, l}$ on $\mathbb{T}_K$, whereas
before we defined the same random graph on $S_i$.

\begin{definition}\label{T+l}
  Let $\mathcal{T}^{+, l}$ be the union of all subgraphs $H$ of
  $\mathbbm{T}_K$ that are isomorphic to $\mathbbm{T}_l$ with $\sigma_x = + 1,
  \forall x \in H$.
\end{definition}

The next proposition estimates the probability that none of the
vertices of a given set $\Lambda$ belong to any random $(K-1)$-ary tree of spin $+1$ (see Definition
\ref{K-1-arytrees}). This proposition is a main ingredient in the proof
of Theorem~\ref{theorem1}.

\begin{prop}\label{PropIndept}
  For any $\lambda \in (0, 1), \exists \theta_{\lambda} \in (0, 1)$ such that
  for $\theta \geq \theta_{\lambda}$ and any deterministic finite nonempty
  subset $\Lambda$ of $\mathbb{T}_K$,
\begin{equation}\label{eqLambda}
 \mu_{\theta} (\Lambda \cap \mathcal{T}^{+, K - 1} = \oslash) \leq
     \lambda^{| \Lambda |} .
\end{equation}

\end{prop}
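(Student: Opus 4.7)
My plan is to construct, for each finite $\Lambda$, a family of pairwise disjoint ``witness events'' of the form $\tmop{Tree}^+[v]$ or $\tmop{Tree}^+[v,w]$ from Definition~\ref{K-1-arytrees}, each implying that some vertex of $\Lambda$ lies in $\mathcal{T}^{+,K-1}$, with the total number of witnesses at least $|\Lambda|/c$ for some fixed constant $c$. By the independence observation immediately following Definition~\ref{K-1-arytrees}, the probability that all such witnesses fail factorizes, so once I verify that each individual witness has probability at least $1-\eta(\theta)$ with $\eta(\theta)\to 0$ as $\theta\to 1$, the bound
\[
\mu_\theta(\Lambda\cap\mathcal{T}^{+,K-1}=\oslash)\;\le\;\eta(\theta)^{|\Lambda|/c}\;\le\;\lambda^{|\Lambda|}
\]
follows by choosing $\theta_\lambda$ close enough to $1$.

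The single-event bound I would establish by a standard branching-process computation. Write $q(\theta)$ for the probability that a child $u$ of $v$ in $A_{v'}[v]$ is $+1$ and roots an infinite $(K-2)$-ary $+1$ subtree in the part of $A_{v'}[v]$ lying below $u$. A one-step recursion gives the fixed-point equation
\[
q \;=\; \theta\,\mathbb{P}\bigl(\tmop{Bin}(K-1,q)\ge K-2\bigr),
\]
which has $q=1$ as a solution at $\theta=1$; by continuity the maximal solution $q(\theta)$ tends to $1$ as $\theta\to 1$, giving $\mu_\theta(\tmop{Tree}^+[v])\ge\theta\,q(\theta)^{K-1}\to 1$. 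The argument for $\tmop{Tree}^+[v,w]$ is identical, using two independent copies of the branching-process survival event rooted at $v$ and at $w$.

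The heart of the proof is the geometric construction of the disjoint witnesses. Start from the minimal subtree $T_\Lambda\subset\mathbb{T}_K$ spanning $\Lambda$. Each leaf of $T_\Lambda$ lies in $\Lambda$ and yields a singleton witness $\tmop{Tree}^+[v]$ with $T=\{v,v'\}$, where $v'$ is its unique $T_\Lambda$-neighbor; the associated region $A_{v'}[v]$ sits entirely on the ``outer'' side of $v$, and these leaf regions are automatically pairwise disjoint. An interior vertex $v\in\Lambda$ of $T_\Lambda$ cannot be made a leaf of any supertree of $T_\Lambda$ because every half-tree $A_w[v]$ meets some other vertex of $\Lambda$; such $v$ must instead be handled by a paired witness $\tmop{Tree}^+[v,w]$. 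To produce such pairs I would enlarge $T_\Lambda$ by attaching short tails in directions not used by $T_\Lambda$ so that each interior $\Lambda$-vertex becomes a good 2-point of the enlarged tree, and then match interior $\Lambda$-vertices into adjacent pairs along $T_\Lambda$, choosing the auxiliary neighbors $v',w'$ in each pair so that the region $A_{v',w}[v]\cup A_{v,w'}[w]$ is disjoint from all other chosen witness regions. The resulting count of witnesses is at least $|\Lambda|/c$ for a universal constant $c$.

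The main obstacle is this combinatorial/geometric construction, and in particular the treatment of vertices of $\Lambda$ that are branch points of $T_\Lambda$: such a $v$ is neither a leaf nor a good 2-point of any tree containing $T_\Lambda$, so one must move to a different auxiliary subtree for each witness while nonetheless keeping the full collection of witness regions pairwise disjoint. This is precisely where the percolation-type geometric arguments of Appendix~\ref{app:Geometric} are expected to enter, and it is the only step in the proof where the full strength of Definition~\ref{K-1-arytrees}, in both its single-vertex and pair versions, is needed.
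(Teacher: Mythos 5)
Your overall architecture --- a family of pairwise disjoint witness events of the types $\tmop{Tree}^+[v]$ and $\tmop{Tree}^+[v,w]$, numbering at least a constant fraction of $|\Lambda|$, combined with a branching-process estimate showing each witness has probability tending to $1$ as $\theta\to 1$, followed by factorization --- is exactly the paper's, and your Galton--Watson step (the recursion $q=\theta\,\mathbb{P}(\tmop{Bin}(K-1,q)\ge K-2)$ and the formula $\mu_\theta(\tmop{Tree}^+[v])=\theta\,q^{K-1}$) matches Appendix~\ref{app:GaltonWatson}. However, the combinatorial step that you yourself call the heart of the proof is not carried out, and the plan you sketch for it would fail. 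You propose to produce one witness per vertex of $\Lambda$, turning every interior $\Lambda$-vertex into a good 2-point of an enlarged tree. But a vertex $v\in\Lambda$ that is a branch point of the spanning tree has at least three neighbors in it, and attaching tails can only increase degrees, so no supertree makes such a $v$ a leaf or a 2-point; moreover any subgraph of $\mathbb{T}_K$ isomorphic to $\mathbb{T}_{K-1}$ through $v$ must use $K-1$ of $v$'s $K$ neighbors and hence enter at least two of the branches at $v$, so its witness region cannot be kept disjoint from the regions attached to the special vertices in those branches. A similar problem afflicts the ``bad'' 2-points (those adjacent to a leaf or a branch point). You flag this obstacle explicitly but leave it unresolved, which is a genuine gap, since without it there is no lower bound on the number of disjoint witnesses.

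The paper's resolution is a counting dichotomy rather than a per-vertex construction: branch points and bad 2-points never receive witnesses at all. By Lemma~\ref{Grimmet}, in any finite tree the number $M_i$ of $i$-points satisfies $M_i\le M_1$ for $i\ge 3$, where $M_1$ is the number of leaves; combined with $\sum_i M_i=N$ this forces either $M_1\ge\epsilon_1|\Lambda|$ or $M_2\ge\frac{K-1}{K}N$. In the first case one uses only the leaf witnesses $\tmop{Tree}^+[v]$, which are automatically disjoint. In the second case, if additionally $M_1<\epsilon_1|\Lambda|$, then the special vertices that are leaves, $\ge 3$-points, or bad 2-points number at most $C_K M_1$ for a constant depending only on $K$, hence at least $\epsilon_2|\Lambda|$ special vertices are good 2-points of the original spanning tree (Lemma~\ref{geometriclemma}); these are then paired off along maximal chains into at least $\frac{1}{2}\epsilon_2|\Lambda|$ events $\tmop{Tree}^+[v,w]$ on disjoint vertex sets (Lemma~\ref{disjointlemma}). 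Since only $\epsilon|\Lambda|$ witnesses are needed to obtain $\lambda^{|\Lambda|}$ (one absorbs the exponent $\epsilon$ into the choice of $\theta_\lambda$), the troublesome vertices can simply be discarded rather than witnessed. Your proof needs an argument of this kind to be complete.
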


\begin{proof}

Let $T$ be the minimal spanning tree containing all the vertices of
$\Lambda$. We will call the vertices of $\Lambda$ the \textbf{special}
vertices of $T$. Note that all the leaves of $T$ are special vertices.

We first suppose $|\Lambda| \geq 2$; the simpler case $|\Lambda|=1$ will be handled at the
end of the proof. By the distinctness and disjointness results of Lemma \ref{disjointlemma} 
from the Appendix, there exist constants $\epsilon_1,
\epsilon_2 \in (0, \infty)$ depending only on $K$, such that for each such
tree $T$, one or both of the following is valid:
\begin{enumeratealpha}
  \item there are at least $\epsilon_1 | \Lambda |$ leaves $v$ in $T$, with
  the events $\{\tmop{Tree}^+ (v)\}$ mutually independent, and/or

  \item there are at least $\frac{1}{2} \epsilon_2 | \Lambda |$ edges
  having endpoints $v, w$ in $T$ with $v$ a good special 2-point, and the events
  $\{\tmop{Tree}^+ (v, w)\}_{v, w}$ mutually independent.
\end{enumeratealpha}

Let us first suppose that a) holds. We claim that, for $v$ any leaf of $T$,

\begin{equation}
 \mu_{\theta} (\Lambda \cap \mathcal{T}^{+, K - 1} = \oslash) \leq [1 -
   \mu_{\theta} (\tmop{Tree}^+ [v])]^{\epsilon_1 | \Lambda |} .
\end{equation}
The claim follows from a string of inclusions. First,

\begin{equation}
  \{\Lambda \cap \mathcal{T}^{+, K - 1} = \oslash \}
  \subseteq \bigcap_{v \in T, v \text{ leaf of } T} \{v \notin \mathcal{T}^{+, K
  - 1} \}.
\end{equation}

\noindent
But if $v$ is a leaf of $T$, then
\begin{equation}
  \{v \notin \mathcal{T}^{+, K - 1} \} \subseteq \tmop{Tree}^+ [v]^c,
\end{equation}

\noindent
so that
\begin{equation}
 \bigcap_{v \in T, v \text{ leaf of } T} \{v \notin \mathcal{T}^{+, K
  - 1} \}
  \subseteq \bigcap_{v \in T, v \text{ leaf of } T}
  \tmop{Tree}^+ [v]^c.
\end{equation}

\noindent
Labeling $\epsilon_1 | \Lambda |$ of the leaves in a) as $v_j$,
we restrict the above intersection to the 
leaves $v_j$ of $T$, so that

\begin{equation}
   \bigcap_{v \in T, v \text{ leaf of } T}
  \tmop{Tree}^+ [v]^c
   \subseteq
    \bigcap_{j=1}^{\epsilon_1 | \Lambda |}
  \tmop{Tree}^+ [v_j]^c.
\end{equation}

\noindent
Since the events $\tmop{Tree}^+ [v_j] $ are mutually independent,

\begin{equation}
\mu_{\theta} (\Lambda \cap \mathcal{T}^{+, K - 1} = \oslash) \leq
\prod_{j=1}^{\epsilon_1 |\Lambda|} \mu_{\theta} (\tmop{Tree}^+ [v]^c),
\end{equation}

\noindent
implying the claim.

Alternatively, suppose that b) holds. Now we claim that
\begin{equation}
 \mu_{\theta} (\Lambda \cap \mathcal{T}^{+, K - 1} = \oslash) \leq [1 -
   \mu_{\theta} (\tmop{Tree}^+ [v, w])]^{\frac{1}{2} \epsilon_2 | \Lambda
   |},
\end{equation}

\noindent
where $v$ is a good special 2-point of $T$ and $w$ is one of $v$'s neighbors.
This claim also follows from a string of inclusions. First,

\begin{equation}
  \{\Lambda \cap \mathcal{T}^{+, K - 1} = \oslash \}
  \subseteq \bigcap_{{\{v, w\} \in T, v, w \text{ adj.}} \atop { \text{ and \emph{v} is a good special
  2-point of } T}}
  \{\{v, w\} \notin \mathcal{T}^{+, K
  - 1} \}.
\end{equation}

\noindent
If $\{v, w\}$ are adjacent and $v$ is a good special 2-point of $T$, then
\begin{equation}
  \{\{v, w\} \notin \mathcal{T}^{+, K - 1} \} \subseteq \tmop{Tree}^+ [v, w]^c.
\end{equation}

\noindent
As with the proof of the previous claim, we label $\frac{1}{2}\epsilon_2 |\Lambda|$ of the
pairs of vertices given in b) as $\{v_j, w_j\}$. Then

\begin{equation}
 \{\Lambda \cap \mathcal{T}^{+, K - 1} = \oslash\}
  \subseteq \bigcap_{j=1}^{\frac{1}{2} \epsilon_2 | \Lambda |}
   \tmop{Tree}^+[v_j, w_j]^c.
\end{equation}

\noindent
The second claim follows from the mutual independence of the events
$\tmop{Tree}^+[v_j, w_j]$. The two claims imply (\ref{eqLambda}) for $|\Lambda|\geq 2$
by taking

\begin{equation}
     \lambda > \lambda^*(\theta) = \min \left\{ (1 - \mu_{\theta} (\tmop{Tree}^+
   [v])^{\epsilon_1}, (1 - \mu_{\theta} (\tmop{Tree}^+ [v, w])^{\frac{1}{2}
   \epsilon_2} \right\},
\end{equation}
and using Lemma~\ref{app:lemmaA3} of Appendix~\ref{app:GaltonWatson}.

If $|\Lambda|=1$, suppose the only vertex in $\Lambda$ is 0, a distinguished vertex. Then

\begin{eqnarray}
  \mu_{\theta} ( 0 \notin \mathcal{T}^{+, K - 1})
  & = & 1-\mu_\theta \Big(\bigcup_{j=1}^K \tmop{Tree}^+[a_j] \Big) \\
  & \leq & 1-\mu_\theta (\tmop{Tree}^+[a_1]),
\end{eqnarray}

\noindent
where $a_1, \ldots, a_K$ are the neighbors of 0 and
$\text{Tree}^+[a_j]$ is defined as in Definition \ref{K-1-arytrees}
with $T$ the tree containing only vertices 0 and $a_j$. Then (\ref{eqLambda})
follows in this case by taking $\lambda > \lambda^*(\theta)= 1-\mu_\theta (\tmop{Tree}^+[a_1])$
and using Equation (\ref{taugoesto1}) and Lemma \ref{app:lemmaA1}. This completes the
proof.

\end{proof}

\section{Main Results}\label{sec:Main}
We study the connected components of $S^{\infty} \setminus \mathcal{T}$ as a
subgraph of $S^{\infty}$, and show that if $\theta$ is close enough to 1 these
connected components are finite almost surely. We will show that
each of these finite
connected components of $-1$ vertices shrinks and is eliminated
in finite time leading to fixation of all vertices
to +1.

\begin{definition}
  For any $x \in S^{\infty}$, $D_x$ is the connected component of $x$ in
  $S^{\infty} \backslash \mathcal{T}$: $D_x$ is the set of vertices $y \in
  S^{\infty}$ s.t. $x \overset{S^{\infty} \setminus
  \mathcal{T}}{\leftrightarrow} y$, i.e., there exists a path
  $(x_0 = x, x_1,
  \ldots, x_N = y)$ in $S^\infty$ with every~$x_j \notin \mathcal{T}$.
\end{definition}

\begin{prop}\label{finitecomponents}
  Given $K$, there exists $\theta_{\ast} < 1$ such that for $\theta >
  \theta_{\ast}$, $S^{\infty} \setminus \mathcal{T}$ is a union of almost
  surely finite connected components.
\end{prop}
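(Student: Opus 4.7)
The plan is a standard Peierls-style cluster-counting argument built on top of Proposition~\ref{PropIndept}. Fix $x \in S^\infty$. I estimate
\begin{equation*}
\mathbbm{P}_\theta(|D_x| \geq n) \leq \sum_{\substack{\Lambda \ni x,\; |\Lambda|=n \\ \Lambda \text{ connected in } S^\infty}} \mu_\theta(\Lambda \cap \mathcal{T} = \oslash),
\end{equation*}
since $\{|D_x| \geq n\}$ forces the existence of some connected $n$-vertex subgraph of $S^\infty$ containing $x$ and lying in $S^\infty \setminus \mathcal{T}$ (for instance, any BFS-initial segment of $D_x$). Because $S^\infty$ has bounded degree $K+2$, a routine lattice-animal bound gives at most $C^n$ such $\Lambda$'s for some $C = C(K) < \infty$. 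Once the per-$\Lambda$ probability is shown to decay like $\lambda^n$ with $\lambda$ arbitrarily small, the sum becomes geometric; choosing $\lambda < 1/C$ (and the corresponding $\theta_\lambda$ close to $1$) forces $\mathbbm{P}_\theta(|D_x| = \infty) = 0$, and a countable union over $x \in S^\infty$ yields the proposition.

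The per-$\Lambda$ estimate is where independence across layers enters, and is the real content of the argument. For $\Lambda$ connected in $S^\infty$, let $\Lambda_i := \Lambda \cap S_i$ and let $\bar\Lambda_i \subset \mathbbm{T}_K$ be the image of $\Lambda_i$ under the obvious bijection $S_i \to \mathbbm{T}_K$; then $|\bar\Lambda_i| = |\Lambda_i|$ and $\sum_i |\bar\Lambda_i| = n$. By construction, $\mathcal{T}_i$ is determined by $\sigma(0)$ restricted to $S_i$, and since the initial configuration is i.i.d.\ Bernoulli the families $\{\sigma_y(0) : y \in S_i\}$ are mutually independent across $i$. Hence the events $\{\bar\Lambda_i \cap \mathcal{T}^{+,K-1} = \oslash\}$ are mutually independent, and applying Proposition~\ref{PropIndept} on each layer (valid once $\theta \geq \theta_\lambda$) gives
\begin{equation*}
\mu_\theta(\Lambda \cap \mathcal{T} = \oslash) = \prod_i \mu_\theta\bigl(\bar\Lambda_i \cap \mathcal{T}^{+,K-1} = \oslash\bigr) \leq \prod_i \lambda^{|\bar\Lambda_i|} = \lambda^n.
\end{equation*}

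Combining the two ingredients gives $\mathbbm{P}_\theta(|D_x| \geq n) \leq (C\lambda)^n$, and the rest is routine. The main conceptual obstacle I expect to surmount is simply recognizing and justifying the layer-independence structure, so that Proposition~\ref{PropIndept}, which is a single-layer statement on $\mathbbm{T}_K$, can be lifted to a statement on the whole stack $S^\infty$ without losing the exponential rate; the lattice-animal count and the final geometric sum are standard for any bounded-degree graph. No separate argument is needed for small $K$ here: Proposition~\ref{PropIndept} is proved for all $K \geq 3$, and the small-$K$ complications flagged in Section~\ref{sec:Preliminaries} concern dynamical stability of $\mathcal{T}$ rather than the finiteness of the components of its complement at time $0$.
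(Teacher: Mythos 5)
Your proposal is correct and follows essentially the same route as the paper: the key step in both is decomposing the connected set into its intersections with the layers $S_i$, applying Proposition~\ref{PropIndept} on each layer, and using independence across layers to get the bound $\lambda^n$, then beating the entropy of the counting by taking $\lambda$ small. The only cosmetic difference is that the paper sums over self-avoiding paths (bounding $\mathbb{E}_\theta[|D_0|]$ with the explicit count $(K+2)(K+1)^{N-1}$) rather than over lattice animals, and your closing remark that the small-$K$ modifications are irrelevant to this proposition is also accurate.
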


\begin{proof}
It suffices to show that $D_0$ is finite almost surely, where
0 is a distinguished~vertex in $S^{\infty}$. Since $\mathbb{E}_\theta \left[ \left|
D_0 |] < \infty \right. \right.$ implies $D_0 < \infty$ a.s., it suffices to
show $\mathbb{E}_\theta \left[ \left| D_0 |] < \infty \right. \right.$.

Let $\gamma_N$ represent any site self-avoiding path in $S^\infty$
of length $|\gamma_N| = N \geq 0$ starting
at 0, then by standard arguments

\begin{equation}\label{finitesum}
 \mathbb{E}_\theta \left[ \left| D_0 |] \right. \right. \leq
 \sum_{N = 0}^{\infty}
  \sum_{\gamma_N, | \gamma_N | = N} \mathbb{P}(\gamma_N \in D_0),
\end{equation}

\noindent
where by $\gamma_N \in D_0$ we mean that all the vertices of $\gamma_N$ belong to
$D_0$.

To show the sum is finite we need to bound $\mathbb{P}(\gamma_N \in D_0)$.
Suppose the vertex set of $\gamma_N$ is $\Lambda_1 \cup \ldots \cup \Lambda_J$,
where for each $1 \leq i \leq J$, $\Lambda_i$ is a nonempty subset of
$S_{l_i}$ for some $l_i \in \mathbb{Z}$ with the $l_i$ distinct. We now apply
Proposition \ref{PropIndept} to $\Lambda_i$ in each of the layers $S_{l_i}$, which are isomorphic
to $\mathbb{T}_K$. This shows that for any $\lambda \in (0, 1), \exists \theta_{\lambda}
\in (0, 1)$ such that for $\theta \geq \theta_{\lambda}$,

\begin{equation}
 \mathbb{P}_{\theta} (\Lambda_i \cap \mathcal{T}_{l_i}^{+, K - 1} = \oslash) \leq
   \lambda^{| \Lambda_i |}.
\end{equation}

\noindent
Since the $\Lambda_i$ are subsets of distinct levels $S_{l_i}$ of $S^{\infty}$, the
events $\{\Lambda_i \cap \mathcal{T}_{l_i}^{+, K - 1} = \oslash\}$ are
mutually independent. Therefore for $\theta \geq \theta_{\lambda}$,

\begin{eqnarray}
  \mathbb{P}_\theta (\gamma_N \in D_0)
  & = & \mathbb{P}_\theta \left( \{\Lambda_1 \cap
  \mathcal{T}_{l_1}^{+, K - 1} = \oslash\} \cap \ldots \cap \{\Lambda_J \cap
  \mathcal{T}_{l_J}^{+, K - 1} = \oslash\} \right) \\
  & = & \prod_{i=1}^{J} \mathbb{P}_\theta \left( \{\Lambda_i \cap
  \mathcal{T}_{l_i}^{+, K - 1} = \oslash\} \right) \\
  & \leq & \lambda^{| \Lambda_1 | + \ldots +| \Lambda_J |} \\
  & = & \lambda^N .
\end{eqnarray}

\noindent
Equation~(\ref{finitesum}) and the above bound on
$\mathbb{P}_\theta (\gamma_N \in D_0) $ imply

\begin{eqnarray}
  \mathbb{E}_\theta \left[ \left| D_0 |] \right. \right.
  & \leq & \sum_{N = 0}^{\infty} \lambda^N
   \sum_{\gamma_N, | \gamma_N | = N} 1 \\
  & = & \sum_{N = 0}^{\infty} \rho (N) \lambda^N,
\end{eqnarray}

\noindent
where $\rho (N)$ is the number of self-avoiding paths of length $N$ starting
at 0. It is easy to see that

\begin{equation}
 \rho (N) \leq (K + 2) (K + 1)^{N - 1}.
\end{equation}

\noindent
Thus

\begin{eqnarray}
 \mathbb{E}_\theta \left[ \left| D_0 |] \right. \right. & \leq & (K + 2) \sum_{N =
  0}^{\infty} (K + 1)^{N - 1} \lambda^N .
\end{eqnarray}

\noindent
The proof is finished by choosing $\theta_{\ast}=\theta_{\lambda}$ for
$\lambda<\frac{1}{K+1}$.
\end{proof}

\begin{proof}[Proof of Theorem \ref{theorem1} for $K\geq 5$]

Taking $\theta_{\ast}$ as in Proposition \ref{finitecomponents}, $S^{\infty}
\setminus \mathcal{T}$ is a union of almost surely finite connected components:

\begin{equation}
 S^{\infty} \setminus \mathcal{T} = \bigcup_i D_i,
\end{equation}

\noindent
where the $D_i$'s are nonempty, disjoint and almost surely finite with
$D_i=D_{x_i}$ for some $x_i$.

Fix any $i$; it suffices to show that $D_i$
is eliminated by the dynamics in finite time. By this we mean that there
exists $T_{i} < \infty$ such that for any $y \in D_i, \sigma_y (t) = + 1, \forall t
\geq T_i$, and so the droplet $D_i$ fixates to $+ 1$. We proceed to show this.

For any set $B \subset S^{\infty}$, let
\begin{equation}
 \partial B =\{x \in B \text{ such that
there is an edge } e_{x y}\in \mathbb{E}^\infty \text{ with } y \in B^c \}.
\end{equation}
\noindent
$\partial (D_i^c) \subset
\mathcal{T}$ so $\partial (D_i^c)$ is stable with respect to the dynamics and for
any $x \in \partial (D_i^c)$, $\sigma_x (0) = + 1$.

Since $D_i$ is finite it contains a longest path, $p = (z,\ldots, w)$. Since $p$
cannot be extended to a longer path, $z$ must have $K + 1$ neighbors in $D_i^c$.
When $z$'s clock first rings, $z$ flips to $+ 1$ and fixates for all later times.
This argument can be extended to show $D_i$ is eliminated
(i.e., the $-1$ vertices are all flipped to $+1$) by the dynamics in
finite time as follows. Consider the set of vertices in $D_i$ which have not yet
flipped to $+ 1$ by some time $t$, and take $t$ to infinity. Suppose this
limiting set is nonempty. Since this set is finite, it contains a longest path
$\tilde{p} = (\tilde{z}, \ldots, \tilde{w})$. But now $K + 1$ of $\tilde{z}$'s
neighbors have spin $+ 1$ as $t \rightarrow \infty$, implying that $\tilde{z}$
had no clock rings in
$[T, \infty)$ for some finite $T$. This event has zero probability of occurring,
which contradicts the supposition of a nonempty limit set.
\end{proof}

The proof of Theorem \ref{theorem1} for $K=3$ and 4 is slightly different than
for $K\geq 5$, since for $K=3$ (respectively, $K=4$) the $\mathcal{T}_i$'s of
Definition \ref{T+lS} are not stable
with respect to the dynamics: each vertex $v\in \mathcal{T}_i$ has 2 (resp., 3)
neighbors of spin $+1$, which is always less than a strict majority.
The proof for $K=3$ or 4 requires
a different decomposition of the space $S^\infty$ and definition of stable
subsets. With this purpose in mind, we express $S^\infty$ as

\begin{equation}
 S^{\infty} = \bigcup_{i = - \infty}^{\infty} \tilde{S}_i,
\end{equation}

\noindent
where $\tilde{S}_i =\mathbb{T}_k \times \{2i, 2i+1\}=\{(u, j) : u \in \mathbb{T}_K,
\text{ and } j=2i \text{ or } 2i+1\}$ (see Equation (\ref{Sinfinity}) for a comparison).
We call a vertex $x=(u, 2i)$ or its partner in $\tilde{S}_i$, $\hat{x}=(u, 2i+1)$,
\textbf{doubly open} if both $\sigma_x(0)=+1$ and $\sigma_{\hat{x}}(0)=+1$;
this occurs with probability $\theta^2$. We proceed by defining a set of fixed vertices in
$S^\infty$ in the spirit of Section 2.1.

\begin{definition}
  For $i$ fixed, let $\mathcal{\tilde{T}}_i^{+, l}$ be the union of all subgraphs
  $H$ of $\tilde{S}_i$ that are isomorphic to $\mathbb{T}_l \times \{2i, 2i+1 \}$
  such that $\forall x\in H$, $x$ is doubly open.
\end{definition}

It is easy to see that $\mathcal{\tilde{T}}_i^{+, K - 1}$ is stable for $K=3$
or 4 with respect to the dynamics on $S^\infty$.
Let $\mathcal{\tilde{T}}$ denote the union
of $\mathcal{\tilde{T}}_i^{+, K - 1}$ across all levels $\tilde{S}_i$, i.e.,

\begin{equation}
\mathcal{\tilde{T}} = \bigcup_{i = - \infty}^{\infty} \mathcal{\tilde{T}}_i,
\end{equation}

\noindent
where $\mathcal{\tilde{T}}_i = \mathcal{\tilde{T}}_i^{+, K - 1}$.

\begin{proof}[Proof of Theorem \ref{theorem1} for $K= 3$ and $4$]
We map one independent percolation model, $\sigma_{(u, j)}(0)$ on $S^\infty$
with parameter $\theta$, to another one, $\tilde{\sigma}_{(u, i)}(0)$ on
$S^\infty$ with parameter $\theta^2$, by defining $\tilde{\sigma}_{(u, i)}(0)=+1$
(resp., $-1$) if $(u, 2i)$ is doubly open (resp., is not doubly open).
Propositions \ref{PropIndept} and \ref{finitecomponents} applied to $\tilde{\sigma}$
imply that Proposition \ref{finitecomponents} with $\mathcal{T}$ replaced
by $\mathcal{\tilde{T}}$ is valid for the $\tilde{\sigma}(0)$ percolation model.
The rest of the proof proceeds as in the case for $K \geq 5$.
\end{proof}

\begin{proof}[Proof of Theorem \ref{theorem2}]
The proof proceeds analogously to that of Theorem \ref{theorem1}, except that
the conclusion of Proposition \ref{finitecomponents}, that
$S^\infty \setminus \mathcal{T}$ almost surely has no infinite components
(for $\theta$ close to $1$), is replaced by an analogous result for
$G\setminus \mathcal{T}_G$ with an appropriately defined $\mathcal{T}_G$.
We next specify a choice of $\mathcal{T}_G$ for each of our graphs $G$ and leave
further details (which are straightforward given the proof of Proposition
\ref{finitecomponents}) to the reader.

For $G=\mathbb{T}_K$ with {\it any\/} $K\geq 3$, we simply label
$\mathcal{T}_G=\mathcal{T}^{+, K-1}$ (see Definition \ref{T+l}). For $G=S^\text{semi}$,
$\mathcal{T}_G$ depends on $K$ like it did for $G=S^\infty$ - i.e., for $K\geq5$,
we take
\begin{equation}
 \mathcal{T}_G = \bigcup_{i=0}^\infty \mathcal{T}^{+, K-1}_i,
\end{equation}

\noindent
and for $K=3$ or $4$ we take

\begin{equation}
 \mathcal{T}_G = \bigcup_{i=0}^\infty \tilde{\mathcal{T}}^{+, K-1}_i.
\end{equation}

\noindent
For $G=S_f^l$ or $S_p^l$ with $K\geq 5$, we take

\begin{equation}
 \mathcal{T}_G = \bigcup_{i=0}^{l-1} \mathcal{T}^{+, K-1}_i.
\end{equation}

\noindent
For $G=S_f^l$ or $S_p^l$ with $K=3$ or $4$, the choice of $\mathcal{T}_G$
depends on whether $l$ is even or odd since in the odd case the layers cannot
be evenly paired. If $l$ is even, then we take

\begin{equation}
 \mathcal{T}_G = \bigcup_{i=0}^\frac{l-2}{2} \tilde{\mathcal{T}}^{+, K-1}_i.
\end{equation}

\noindent
For $l$ odd (and $\geq 3$), we pair off the first $l-3$ layers and then use the
final $3$ layers to define
$\tilde{\tilde{\mathcal{T}}}^{+, K-1}$ in which the use
of doubly open sites for $\tilde{\mathcal{T}}^{+, K-1}$ is replaced by
\textbf{triply open} sites; then we take

\begin{equation}
 \mathcal{T}_G = \left( \bigcup_{i=0}^\frac{l-3}{2}
 \tilde{\mathcal{T}}^{+, K-1}_i \right) \cup \tilde{{\tilde{\mathcal{T}}}}^{+, K-1}.
\end{equation}
\end{proof}

\begin{appendices}
\section{Galton-Watson Lemmas}\label{app:GaltonWatson}

The goal of this section is to show that the quantity

\begin{equation} \label{lambda}
\lambda^* (\theta) = \min \left\{ (1 - \mu_{\theta} (\tmop{Tree}^+
[v])^{\epsilon_1}, (1 - \mu_{\theta} (\tmop{Tree}^+ [v, w])^{\frac{1}{2}
\epsilon_2} \right\},
\end{equation}

\noindent
which appears at the end of the proof of Proposition~\ref{PropIndept},
converges to $0$ as $\theta \rightarrow 1$. Here $v$ is a leaf of a subtree
$T$ of $\mathbb{T}_K$, $\{v, w\}$ is a pair of adjacent vertices of $T$ such
that $v$ is a good 2-point (as in Definition~\ref{K-1-arytrees}),
and $\epsilon_1, \epsilon_2$ are fixed constants.

For this purpose we consider independent site percolation on $\mathbb{T}_K$ and
let $a_1, a_2, \ldots, a_K$ denote the $K$ neighbors of 0, a distinguished
vertex in $\mathbb{T}_K$. We associate to each $a_i$ a tree $A_0 [a_i]$
(see Definition \ref{TreeGraphs}), for $i = 1, \ldots, K$ -- see Figure~\ref{fig:app1}.

\begin{figure}[H]
\centering
\includegraphics{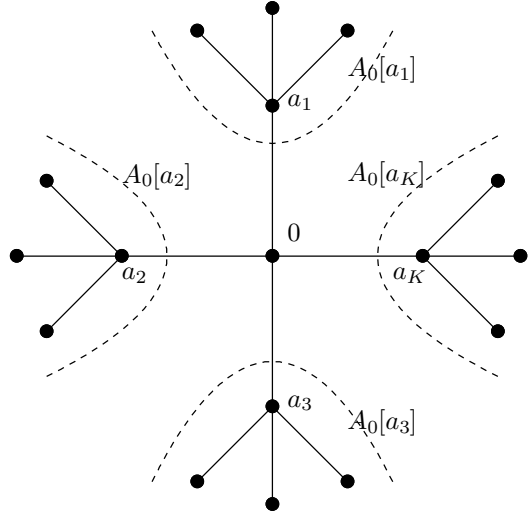}
\caption{$K$-ary tree with labeled vertices and branches}
\label{fig:app1}
\end{figure}

Let $T$ be a subtree of $\mathbb{T}_K$ such that
$T$ contains $a_1$ and $0$ is a leaf of $T$ -- see Figure~\ref{fig:app2}.

\begin{figure}[H]
\centering
\includegraphics{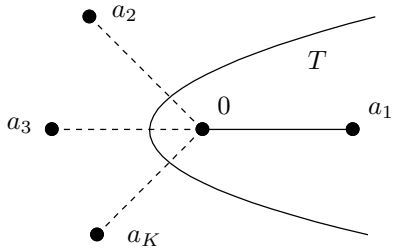}
\caption{$T$ is a subtree of $\mathbb{T}_K$ and 0 is a leaf of $T$}
\label{fig:app2}
\end{figure}

Let $b$ be one of the neighbors of $a_2$ (other than 0) and $T'$ be a subtree
of $\mathbb{T}_K$ containing $b, a_2, 0$ and $a_1$ (but not $a_3, \ldots, a_K$) such that $a_2$ is a
good 2-point of $T'$  -- see Figure~\ref{fig:app3}.

\begin{figure}[H]
\centering
\includegraphics{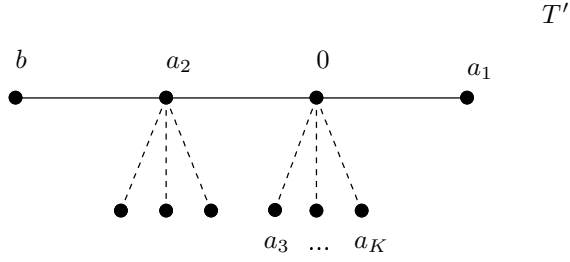}
\caption{$T'$ is a subtree of $\mathbb{T}_K$ that contains $b, a_2, 0$ and $a_1$
 such that $a_2$ is a good 2-point of $T'$}
\label{fig:app3}
\end{figure}

We consider the  events $\tmop{Tree}^+[0]$ with respect to
$T$ and $\tmop{Tree}^+[a_2, 0]$ with respect to $T'$ (see Definition~\ref{K-1-arytrees})
and estimate $\mu_{\theta} (\tmop{Tree}^+[0])$,
$\mu_{\theta} (\tmop{Tree}^+[a_2, 0])$ by analyzing a related Galton-Watson~process.

\begin{definition}
For any vertex $v \in \mathbb{T}_K$, let $C (v)$ denote the $+1$ spin
cluster of $v$, that is, $C (v)$ is the set of vertices $u$ in
$\mathbb{T}_K$ such that the path from $v$ to $u$ (including $v$ and $u$)
includes only vertices $w$, with $\sigma_w = + 1$.
\end{definition}

\noindent
Let
\begin{equation}
Z_n = |v \in A_0 [a_1] \cap C (a_1) :  d(a_1, v) = n|,
\end{equation}

\noindent
where $d (a_1, v)$ represents the graph distance(i.e., the minimum number of edges
) between $a_1$ and $v$.
$Z_0 = 1$ if and only if $\sigma_{a_1} = + 1$, and in general $Z_n$ is the
number of vertices in $A_0 [a_1]$ at distance~$n$ from $a_1$ that are in
$a_1$'s $+1$ spin cluster. $Z_n$ is a Galton-Watson branching process with
offspring distribution $\tmop{Bin} (K - 1, \theta)$.

Let $\mathbb{T}^{\tmop{root}} [x]$ denote a tree with root $x$, such that $x$
has coordination number $K - 2$ and all the other vertices have coordination
number $K-1$.
\noindent
The following definition is close to that of $\tmop{Tree}^+[x]$ (see
Definition \ref{K-1-arytrees} and Figure \ref{fig:mainpart3}), except that here the
$(K-1)$-ary tree in question is rooted.

\begin{definition}
\emph{\bf{Random rooted $(K - 1)$-ary trees of spin $+1$}}

\noindent
Consider two vertices $v, v'\in \mathbb{T}_K$ such that $v'$ is a neighbor of
$v$. Let $\tmop{Part}_{v'}^+ [v]$ denote the event that there exists a subgraph
$H$ of $\mathbb{T}_K$ isomorphic to $\mathbb{T}^{\tmop{root}} [v]$
which contains $v$ and is contained in $A_{v'} [v]$, such that for all $u \in H, \sigma_u = + 1$.
\\
\newline
\noindent
Consider three vertices $x, y, z$ such that $x$ and $z$ are neighbors of
$y$. Let $\tmop{Part}_{x, z}^+ [y]$ be the event that there exists a subgraph
$H$ of $\mathbb{T}_K$ isomorphic to $\mathbb{T}^{\tmop{root}} [x]$ which contains $y$
and is contained in $A_{x, z} [y]$ (see Figure \ref{fig:mainpart2}), such that for all
$u \in H, \sigma_u = + 1$.
\end{definition}

Define $\tau (\theta)$ as
\begin{equation}
\tau (\theta) =\mu_\theta(\tmop{Part}_0^+ [a_1]).
\end{equation}

\noindent
$\tmop{Part}_0^+ [a_i]$ and $\tmop{Part}_0^+ [a_j]$ are independent for $i
\neq j$ by construction. The event $\tmop{Tree}^+[0] $
is equivalent to the spin at $0$ being $+1$ and the vertices
$a_2, \ldots, a_K$ being the roots of $(K-1)$-ary trees of spin $+1$, so that

\begin{equation}\label{taugoesto1}
\mu_{\theta} (\tmop{Tree}^+[0]) = \theta \, \tau(\theta)^{K -1} .
\end{equation}

\begin{lem} \label{app:lemmaA1}
$\tau (\theta) \rightarrow 1$ as $\theta \rightarrow 1$.
\end{lem}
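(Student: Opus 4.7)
The plan is to reformulate the event $\tmop{Part}_0^+[a_1]$ as the survival of a Galton--Watson branching process, derive the fixed-point equation it satisfies, and then show by a Taylor expansion around $\xi = 1$ that the relevant fixed point tends to $1$ as $\theta \to 1$.

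First I would interpret $\tmop{Part}_0^+[a_1]$ combinatorially: the event asserts that $a_1$ is the root of a subtree $H \subset A_0[a_1]$ of spin $+1$ in which $a_1$ has $K - 2$ forward children and every deeper vertex also has $K - 2$ forward children. Declaring a vertex $u \in A_0[a_1]$ to be \textbf{good} when $\sigma_u = + 1$ and at least $K - 2$ of its $K - 1$ forward neighbors are themselves good, one has $\tmop{Part}_0^+[a_1] = \{ a_1 \text{ is good} \}$. By translation invariance of $\mu_\theta$ and the independence of spin configurations on disjoint subtrees of $\mathbb{T}_K$, the probability $\tau(\theta)$ satisfies
\begin{equation*}
    \tau = \theta \cdot \mathbb{P}\bigl(\tmop{Bin}(K-1, \tau) \geq K-2\bigr) = \theta F(\tau), \qquad F(\xi) := \xi^{K-2}\bigl[K-1-(K-2)\xi\bigr].
\end{equation*}
Truncating the defining event to finite depth~$n$ and writing $p_n$ for the probability that $a_1$ is the root of a depth-$n$ good subtree, one has $p_0 = 1$ and $p_{n+1} = \theta F(p_n)$; since $F$ is nondecreasing on $[0,1]$ the sequence $p_n$ is monotone nonincreasing with limit $\tau(\theta)$, so $\tau(\theta)$ is the largest fixed point of $\xi = \theta F(\xi)$ in $[0,1]$.

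Next I would study that equation near $\xi = 1$. Direct differentiation of $F(\xi) = (K-1)\xi^{K-2} - (K-2)\xi^{K-1}$ gives $F(1) = 1$, $F'(1) = 0$ and $F''(1) = -(K-1)(K-2)$, so
\begin{equation*}
    F(1-\delta) = 1 - \tfrac{(K-1)(K-2)}{2}\,\delta^2 + O(\delta^3).
\end{equation*}
Substituting $\xi = 1 - \delta$ into $\theta F(\xi) = \xi$ yields
\begin{equation*}
    \delta - \theta \cdot \tfrac{(K-1)(K-2)}{2}\,\delta^2 + O(\delta^3) = 1 - \theta.
\end{equation*}
At $\delta = 0$ the left-hand side equals $0 < 1-\theta$, while at $\delta = 2(1-\theta)$ it equals $2(1-\theta) - O\bigl((1-\theta)^2\bigr) > 1-\theta$ for $\theta$ sufficiently close to $1$. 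By the intermediate value theorem, $\xi = \theta F(\xi)$ has a fixed point in $\bigl[1 - 2(1-\theta),\, 1\bigr)$, and since $\tau(\theta)$ is the \emph{largest} such fixed point, $\tau(\theta) \geq 1 - 2(1-\theta) \to 1$ as $\theta \to 1$.

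The main obstacle I anticipate is the ``largest fixed point'' identification: for $K \geq 4$ the equation $\xi = F(\xi)$ admits an intermediate fixed point in $(0,1)$, so one must be certain that $\tau(\theta)$ follows the branch emanating from $\xi = 1$ rather than getting trapped at a smaller root. The depth-$n$ truncation above, initialized at $p_0 = 1$ and combined with the monotonicity of $F$ on $[0,1]$, resolves this.
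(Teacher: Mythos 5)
Your argument is correct. Where the paper disposes of this lemma in one line by citing Proposition 5.30 of Lyons--Peres (on the occurrence of $j$-ary subtrees in Galton--Watson trees), you give a self-contained proof that is essentially the content of that cited proposition, specialized to Bernoulli($\theta$) site percolation on $\mathbb{T}_K$ and $(K-2)$-ary descendant subtrees. The key steps all check out: the recursive characterization of $\tmop{Part}_0^+[a_1]$ is faithful to the paper's definition of $\mathbb{T}^{\tmop{root}}[x]$ (root with $K-2$ children, all other vertices of coordination number $K-1$, hence again $K-2$ children); the generating identity $\mathbb{P}(\tmop{Bin}(K-1,\tau)\geq K-2)=\tau^{K-2}[K-1-(K-2)\tau]$ is right; $F(1)=1$, $F'(1)=0$, $F''(1)=-(K-1)(K-2)$ are all correct; and the monotone iteration from $p_0=1$ legitimately identifies $\tau(\theta)$ as the largest fixed point of $\xi=\theta F(\xi)$, which is exactly what is needed to rule out the spurious interior fixed point (e.g.\ $\xi=1/2$ when $K=4$). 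The only step you gloss over is the identification $\bigcap_n\{ \text{depth-}n\text{ good subtree exists}\}=\tmop{Part}_0^+[a_1]$, which requires a (routine) K\"onig's lemma/compactness remark since the trees realizing the finite-depth events could a priori vary with $n$. What your route buys over the citation is both self-containedness and a quantitative rate, $1-\tau(\theta)\leq 2(1-\theta)$ for $\theta$ near $1$, which is strictly more than the paper claims; the cost is about a page of computation in place of one sentence.
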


\begin{proof}
The proof is a consequence of Proposition 5.30 from~\cite{LP} (about occurrence of $j$-ary
subtrees in Galton-Watson processes).
\end{proof}

Define $\tilde{\tau} (\theta)$ as

\begin{equation}
\tilde{\tau} (\theta) = \mu_{\theta} (\tmop{Part}_{a_1, a_2}^+ [0]).
\end{equation}

\noindent
The event $\tmop{Tree}^+[a_2, 0]$ is equivalent to
$\{ \tmop{Part}_{a_1, a_2}^+ [0] \cap \tmop{Part}_{b, 0}^+ [a_2] \}$,
so that, by the independence of the events $\tmop{Part}_{a_1, a_2}^+ [0]$ and
$\tmop{Part}_{b, 0}^+ [a_2]$,

\begin{equation}\label{tautildegoesto1}
\mu_\theta(\tmop{Tree}^+[a_2, 0]) = \tilde{\tau}(\theta)^2.
\end{equation}

\begin{lem}\label{app:lemmaA2}
$\tilde{\tau} (\theta) \rightarrow 1$ as $\theta \rightarrow 1$.
\end{lem}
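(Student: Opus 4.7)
The plan is to reduce Lemma \ref{app:lemmaA2} to Lemma \ref{app:lemmaA1} by showing that $\tilde\tau(\theta)$ factors as a simple power of $\tau(\theta)$ times $\theta$. Conceptually, since both lemmas are about the existence of a rooted $(K{-}2)$-ary subtree of $+1$ spins in a Galton--Watson tree with $\operatorname{Bin}(K{-}1,\theta)$ offspring distribution, the event $\tmop{Part}_{a_1,a_2}^+[0]$ should just add one independent copy (namely the spin at $0$) plus reduce the number of branches by one compared with the event built from $\tmop{Part}_0^+[a_1]$.

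First I would unpack $\tmop{Part}_{a_1,a_2}^+[0]$ geometrically. By Definition~\ref{TreeGraphs}, $A_{a_1,a_2}[0]$ is the connected component of $0$ in $\mathbb{T}_K\setminus\{a_1,a_2\}$, so the neighbors of $0$ inside $A_{a_1,a_2}[0]$ are exactly $a_3,\ldots,a_K$, and for each $i\in\{3,\ldots,K\}$ the branch rooted at $a_i$ is precisely $A_0[a_i]$. Now $\mathbb{T}^{\tmop{root}}[0]$ is by definition a rooted tree whose root has coordination number $K{-}2$ and whose remaining vertices all have coordination number $K{-}1$; equivalently, $0$ has $K{-}2$ children and every other vertex has $K{-}2$ children. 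Hence the restriction of any such $H\subset A_{a_1,a_2}[0]$ to the branch through $a_i$ is itself a copy of $\mathbb{T}^{\tmop{root}}[a_i]$ sitting inside $A_0[a_i]$. Therefore
\begin{equation}
 \tmop{Part}_{a_1,a_2}^+[0] \;=\; \{\sigma_0=+1\} \,\cap\, \bigcap_{i=3}^{K} \tmop{Part}_0^+[a_i].
\end{equation}

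Next I would invoke independence. The events $\tmop{Part}_0^+[a_i]$ involve only spins in $A_0[a_i]$, and these sets are pairwise disjoint and disjoint from $\{0\}$; under the product measure $\mu_\theta$ the events are mutually independent, and by the symmetry of $\mathbb{T}_K$ each has the same probability $\tau(\theta)=\mu_\theta(\tmop{Part}_0^+[a_1])$. This yields the factorization
\begin{equation}
 \tilde\tau(\theta) \;=\; \mu_\theta\bigl(\tmop{Part}_{a_1,a_2}^+[0]\bigr) \;=\; \theta\,\tau(\theta)^{K-2}.
\end{equation}

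Finally, by Lemma~\ref{app:lemmaA1}, $\tau(\theta)\to 1$ as $\theta\to 1$, and trivially $\theta\to 1$, so $\tilde\tau(\theta)\to 1$. There is no real obstacle here: the one point that requires a moment's care is matching the rooted-tree structure $\mathbb{T}^{\tmop{root}}[0]$ with the decomposition into the $K-2$ branches $A_0[a_i]$, so that each branch is itself a $\mathbb{T}^{\tmop{root}}[a_i]$; once this is checked, the identity $\tilde\tau=\theta\,\tau^{K-2}$ and Lemma~\ref{app:lemmaA1} finish the proof.
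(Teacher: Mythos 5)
Your proof is correct, and it takes a mildly but genuinely different route from the paper. The paper disposes of Lemma~\ref{app:lemmaA2} by re-running the argument of Lemma~\ref{app:lemmaA1}, i.e.\ by applying the Galton--Watson $j$-ary-subtree result (Proposition 5.30 of \cite{LP}) a second time to the branching process seen from $0$ inside $A_{a_1,a_2}[0]$. You instead prove the exact identity $\tilde\tau(\theta)=\theta\,\tau(\theta)^{K-2}$ and then quote Lemma~\ref{app:lemmaA1} as a black box. Your decomposition is sound: the neighbors of $0$ in $A_{a_1,a_2}[0]$ are exactly $a_3,\ldots,a_K$, the root of $\mathbb{T}^{\tmop{root}}$ must use all $K-2$ of them, and the piece of $H$ hanging off each $a_i$ is precisely a copy of $\mathbb{T}^{\tmop{root}}[a_i]$ inside $A_0[a_i]$ (one parent plus $K-2$ children at every non-root vertex), so $\tmop{Part}_{a_1,a_2}^+[0]=\{\sigma_0=+1\}\cap\bigcap_{i=3}^{K}\tmop{Part}_0^+[a_i]$, with independence and equidistribution coming from disjointness of the branches and the symmetry of $\mathbb{T}_K$. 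This is exactly parallel to how the paper itself derives Equation~(\ref{taugoesto1}), $\mu_\theta(\tmop{Tree}^+[0])=\theta\,\tau(\theta)^{K-1}$, so your argument is arguably the more self-contained one: it yields a closed-form relation between $\tilde\tau$ and $\tau$ and avoids a second appeal to the Galton--Watson machinery. The only point needing care, which you handle correctly, is reading the definition of $\tmop{Part}_{x,z}^+[y]$ so that $y$ is the root of the embedded $\mathbb{T}^{\tmop{root}}$ (the paper's ``isomorphic to $\mathbb{T}^{\tmop{root}}[x]$'' is evidently a slip for the abstract rooted tree with root mapped to $y$; any other reading would break the paper's own Equation~(\ref{tautildegoesto1})).
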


\begin{proof}
This result follows as in the proof of Lemma~\ref{app:lemmaA1}.
\end{proof}

Equations (\ref{taugoesto1}) and (\ref{tautildegoesto1}) imply that
$\mu_{\theta} (\tmop{Tree}^+[0])$ and $\mu_\theta(\tmop{Tree}^+[a_2, 0])$
converge to $1$ as $\theta \rightarrow 1$, which immediately implies:

\begin{lem}\label{app:lemmaA3}
$ \lambda^*(\theta) \rightarrow 0$ as $\theta \rightarrow 1$.
\end{lem}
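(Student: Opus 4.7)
The plan is to observe that $\lambda^*(\theta)$ is the minimum of two expressions, each of the form $(1 - p_\theta)^{\alpha}$ with a fixed positive exponent $\alpha$ (namely $\epsilon_1$ or $\tfrac12\epsilon_2$, both depending only on $K$) and a probability $p_\theta$ that I will show tends to $1$ as $\theta\to 1$. Since the exponents are fixed and positive, the map $x\mapsto (1-x)^{\alpha}$ is continuous at $x=1$ with value $0$, so establishing $p_\theta\to 1$ for either term is enough; in fact, doing it for both is trivial with the formulas already in hand.

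First, I would invoke the two identities proved just before the lemma. Equation~\eqref{taugoesto1} gives
\begin{equation*}
\mu_\theta(\tmop{Tree}^+[v]) = \theta\,\tau(\theta)^{K-1},
\end{equation*}
and equation~\eqref{tautildegoesto1} gives
\begin{equation*}
\mu_\theta(\tmop{Tree}^+[v,w]) = \tilde{\tau}(\theta)^{2}.
\end{equation*}
(The identities are stated for the specific vertices $0$ and $a_2$, but by the translation-invariance of $\mu_\theta$ on $\mathbb{T}_K$ and the purely local description of each event, the same formulas hold for any leaf $v$ of a subtree $T$ and any good-2-point pair $\{v,w\}$ of a subtree $T'$ of the required shape.)

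Next, Lemma~\ref{app:lemmaA1} yields $\tau(\theta)\to 1$ and Lemma~\ref{app:lemmaA2} yields $\tilde{\tau}(\theta)\to 1$ as $\theta\to 1$. Substituting into the two displays above, both $\mu_\theta(\tmop{Tree}^+[v])$ and $\mu_\theta(\tmop{Tree}^+[v,w])$ tend to $1$, so $1-\mu_\theta(\tmop{Tree}^+[v])\to 0$ and $1-\mu_\theta(\tmop{Tree}^+[v,w])\to 0$. Raising to the fixed positive powers $\epsilon_1$ and $\tfrac12\epsilon_2$ preserves the limit $0$, and the minimum of two nonnegative quantities each going to $0$ goes to $0$. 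This gives $\lambda^*(\theta)\to 0$.

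There is essentially no obstacle here, since the real content has been pushed into Lemmas~\ref{app:lemmaA1} and~\ref{app:lemmaA2}: those are the statements that, for Bernoulli$(\theta)$ site percolation on $\mathbb{T}_K$, the associated Galton--Watson subtree contains a rooted $(K-1)$-ary subtree of $+1$ spins with probability tending to $1$ as $\theta\to 1$, which is the cited Proposition~5.30 of \cite{LP}. Given those, Lemma~\ref{app:lemmaA3} is just a continuity and bookkeeping step.
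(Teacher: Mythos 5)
Your proof is correct and follows exactly the paper's own route: it combines the identities \eqref{taugoesto1} and \eqref{tautildegoesto1} with Lemmas~\ref{app:lemmaA1} and~\ref{app:lemmaA2} to get $\mu_\theta(\tmop{Tree}^+[v])\to 1$ and $\mu_\theta(\tmop{Tree}^+[v,w])\to 1$, then concludes by continuity of $x\mapsto(1-x)^{\alpha}$ for the fixed positive exponents. Your added remark on translation invariance to pass from the specific vertices $0$ and $a_2$ to a general leaf or good 2-point pair is a harmless clarification the paper leaves implicit.
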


\section{Geometric Lemmas}\label{app:Geometric}

Let $T$ be a finite tree with $N$ vertices and maximal coordination number
$\leq K$. $N_1 \leq N$ of $T$'s vertices are labeled special, such that all of
$T$'s leaves are special vertices. We remark that in Section~\ref{sec:Main},
we start with $|\Lambda|$ special vertices in $\mathbb{T}_K$ and then $T$ is the
minimal subtree of $\mathbb{T}_K$ that contains all the special vertices.

\begin{lem}\label{Grimmet}
  Let $M_1$ be the number of leaves in $T$, $M_2$ the number of $2$-points
  (vertices with exactly two edges in $T$), $\ldots$, $M_K$ the number of
  $K$-points (vertices with exactly $K$ edges in $T$); $M_1 + \ldots + M_K = N$. Then
  \begin{equation}
  M_i \leq M_1
  \end{equation}
  for $i = 3, \ldots, K$.
\end{lem}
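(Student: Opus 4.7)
The plan is to use the classical handshake identity together with the fact that a tree on $N$ vertices has exactly $N-1$ edges. First I would write down the two obvious counting relations:
\begin{equation}
\sum_{i=1}^{K} M_i = N, \qquad \sum_{i=1}^{K} i \, M_i = 2(N-1),
\end{equation}
the second one being twice the edge count of $T$. Subtracting twice the first from the second eliminates $N$ and yields
\begin{equation}
\sum_{i=1}^{K} (i-2)\, M_i = -2,
\end{equation}
which after separating the $i=1$ term rearranges to
\begin{equation}
M_1 = 2 + \sum_{i=3}^{K} (i-2)\, M_i.
\end{equation}

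Since $M_i \geq 0$ for every $i$ and the coefficient $i-2$ is at least $1$ whenever $i \geq 3$, the sum on the right is bounded below by $(i-2)M_i \geq M_i$ for any single fixed $i \in \{3, \ldots, K\}$. Hence $M_1 \geq 2 + M_i > M_i$, which is even a bit stronger than the stated conclusion.

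There is no real obstacle here: the whole lemma is a one-line consequence of the handshake lemma specialized to trees, and the bound $M_i \leq M_1$ comes with slack (in fact $M_i \leq M_1 - 2$). The only thing to be slightly careful about is that $T$ is allowed to consist of a single edge or a single vertex, but the statement is trivial in those degenerate cases since $M_i = 0$ for all $i \geq 3$.
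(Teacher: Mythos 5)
Your proof is correct. The paper does not actually prove this lemma; it only cites Theorem 8.1 of Grimmett's \emph{Percolation}, and your handshake-lemma derivation is precisely the standard self-contained argument behind that citation: combining $\sum_i M_i = N$ with $\sum_i i M_i = 2(N-1)$ gives $M_1 = 2 + \sum_{i\geq 3}(i-2)M_i$, from which $M_i \leq M_1 - 2$ for each fixed $i \geq 3$, slightly stronger than what is claimed. Your handling of the degenerate cases is also fine (for the paper's application $T$ is a minimal spanning tree of a set $\Lambda$ with $|\Lambda|\geq 2$, so $T$ always has at least one edge and the identity $\sum_i M_i = N$ holds as stated).
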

\begin{proof}
The proof can be found, for example, as part of Theorem 8.1 in~\cite{G}.
\end{proof}

\begin{definition}
  Recall that a {\em good\/} 2-point in $T$ is a 2-point both of whose neighbors are 2-points. A
  {\em bad\/} 2-point is a 2-point that is not a good 2-point -- see Figure~\ref{fig:app4}.

\end{definition}

\begin{figure}[H]
\centering
\includegraphics{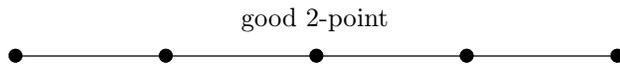}
\caption{A good 2-point}
\label{fig:app4}
\end{figure}

\begin{lem}\label{geometriclemma}
  There exist constants $\epsilon_1, \epsilon_2 \in (0, \infty)$,
  depending only on $K$, such that either:
  \begin{enumeratealpha}
    \item $M_1 \geq \epsilon_1 N_1$, and/or

    \item there are at least $\epsilon_2 N_1$ special good 2-points.
  \end{enumeratealpha}
\end{lem}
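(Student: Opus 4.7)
The plan is to run a dichotomy on the size of $M_1$: either it is already a constant fraction of $N_1$ (case (a)), or it is small, in which case a direct counting argument will force many of the special vertices to be good 2-points (case (b)). The essential quantitative input is a sum-version of Lemma \ref{Grimmet}. Combining $\sum_{i=1}^K M_i = N$ with the handshake identity $\sum_{i=1}^K i M_i = 2(N-1)$ gives $\sum_{i\geq 3}(i-2) M_i = M_1 - 2$, so in particular
\begin{equation}
\sum_{i \geq 3} M_i \;\leq\; M_1 - 2 \;\leq\; M_1,
\end{equation}
which is the form of Grimmett's estimate I will actually use.

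Fix $\epsilon_1 = \frac{1}{4(K+1)}$ and suppose case (a) fails, i.e., $M_1 < \epsilon_1 N_1$. First I would count \emph{special 2-points}: every special vertex is either a leaf, a 2-point, or has degree $\geq 3$, and since every leaf is automatically special,
\begin{equation}
\#\{\text{special 2-points}\} \;\geq\; N_1 - M_1 - \sum_{i \geq 3} M_i \;\geq\; N_1 - 2 M_1.
\end{equation}
Second, I would bound the number of \emph{bad} 2-points by double-counting edges between 2-points and non-2-points: every bad 2-point is adjacent to at least one non-2-point, so
\begin{equation}
\#\{\text{bad 2-points}\} \;\leq\; \sum_{i \neq 2} i\, M_i \;\leq\; K\Bigl(M_1 + \sum_{i \geq 3} M_i\Bigr) \;\leq\; 2K M_1.
\end{equation}
Subtracting the second bound from the first and using $M_1 < \epsilon_1 N_1$,
\begin{equation}
\#\{\text{special good 2-points}\} \;\geq\; N_1 - 2(K+1) M_1 \;\geq\; \bigl(1 - 2(K+1)\epsilon_1\bigr) N_1 \;=\; \tfrac{1}{2} N_1,
\end{equation}
giving case (b) with $\epsilon_2 = \tfrac{1}{2}$.

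The only real obstacle here is careful bookkeeping rather than any deep idea. The special-vs-non-special distinction enters only through the hypothesis that every leaf is special, which lets us lose at most $M_1 + \sum_{i\geq 3} M_i \leq 2M_1$ potential special 2-points; the good-vs-bad distinction, on the other hand, is purely topological and is also controlled by $M_1$ via the handshake identity. Because both losses are linear in $M_1$, they close simultaneously against the single threshold $\epsilon_1 N_1$, which is why the dichotomy succeeds with explicit constants depending only on $K$.
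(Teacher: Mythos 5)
Your proof is correct, and its overall architecture is the same as the paper's: a dichotomy on whether $M_1$ is at least a constant fraction of $N_1$, followed, in the complementary case, by counting the special good 2-points as $N_1$ minus the special non-2-points minus the bad 2-points, with both loss terms controlled linearly in $M_1$. The genuine difference is the quantitative input. The paper invokes Lemma \ref{Grimmet} ($M_i \leq M_1$ for $i \geq 3$, cited from Grimmett) and bounds the losses term by term, obtaining $\epsilon_1 = \frac{1}{K(K-1)}$ and $\epsilon_2 = \frac{K^2-3K+4}{2K(K-1)}$; you instead derive the identity $\sum_{i\geq 3}(i-2)M_i = M_1 - 2$ directly from the handshake lemma, which gives the aggregate bound $\sum_{i\geq 3} M_i \leq M_1$ in one line and makes the argument self-contained (no citation needed), while also yielding somewhat cleaner constants ($\epsilon_1 = \frac{1}{4(K+1)}$, $\epsilon_2 = \frac12$). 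Your bound on the bad 2-points via double-counting edges incident to non-2-points is the same idea as the paper's $M_1 + 3M_3 + \cdots + KM_K$ estimate, just packaged as $\sum_{i\neq 2} i M_i$. Since the lemma only asks for some positive constants depending on $K$, either set of constants serves equally well in the application.
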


\begin{proof}
By Lemma \ref{Grimmet}

\begin{equation}
 \sum_{i = 3}^K M_i \leq (K - 2) M_1,
\end{equation}

\noindent
and since $\sum_1^K M_i = N$,

\begin{eqnarray}
 (K - 1) M_1 + M_2 & = & M_1 + M_2 + (K-2)M_1 \\
 & \geq & \sum_1^K M_i = N.
\end{eqnarray}

\noindent
Thus either $(K - 1) M_1 \geq \frac{N}{K}$ or $M_2 \geq \frac{N (K - 1)}{K}$.
In the first case, since $N \geq N_1,$

\begin{equation}
  M_1 \geq \frac{N}{K (K - 1)} \geq \frac{1}{K (K - 1)} N_1,
\end{equation}

\noindent
and letting $\epsilon_1 = \frac{1}{K (K - 1)}$ gives a).

In the second case $M_2 \geq \frac{N (K - 1)}{K}$, and if a) is not valid with
$\epsilon_1 = \frac{1}{K (K - 1)}$, then $M_1 \leq \frac{N_1}{K(K-1)}$.
To prove b) we need to count the various types of special
vertices in $T$. The set of special vertices is comprised of:
\begin{itemize}

  \item special good 2-points; let {\it Good\/} denote the set of such vertices,
  \item special bad 2-points; let {\it Bad\/} denote the set of such vertices,
  \item special leaves, special 3-points, $\ldots$ , special $K$-points;
  let {\it Other\/} denote
  the set of such vertices.
\end{itemize}

Since $\left| \text{Good} \left| = N_1 - | \text{Bad} | - | \text{Other}
\left| \right. \right. \right.$, we need to upper bound $\left| \text{Other}
\left| \right. \right.$ and $\left| \text{Bad} \left| \right. \right.$.
By Lemma~\ref{Grimmet},

\begin{eqnarray}
  \left| \text{Other} \left| \right. \right.
  & \leq & M_1 + M_3 + \ldots + M_K \\
  & \leq & (K - 2) M_1 \\
  & \leq & \frac{K - 2}{K (K - 1)} N_1.
\end{eqnarray}

\noindent
Now $\left| \text{Bad} \right| \leq \left|
\{\text{all bad 2-points} \} \right|$ and it is easy to see that the latter
is upper bounded by $M_1 + 3M_3 + \ldots + K M_K$. Thus by Lemma \ref{Grimmet},

\begin{eqnarray}
  \left| \text{Bad} \left| \right. \right.
  & \leq & M_1 + 3 M_3 + \ldots + K M_K  \\
  & \leq & M_1 (1+3+\ldots+K) \\
  & \leq & \frac{1}{2} K (K - 1) M_1 \\
  & \leq & \frac{1}{2} N_1,
\end{eqnarray}
since $M_1\leq\frac{N_1}{K(K-1)}$. Thus

\begin{eqnarray}
 \left| \text{Good} \right|
 & = & N_1 - \left|\text{Bad}\right| - \left|\text{Other}\right| \\
 & \geq & N_1 \left(1-\frac{K-2}{K(K-1)}-\frac{1}{2} \right) \\
 & = & N_1 \left( \frac{K^2-3K+4}{2K(K-1)} \right).
\end{eqnarray}

\noindent
We let $\epsilon_2 = \frac{K^2-3K+4}{2K(K-1)}>0$ and so
$\left| \text{Good} \right| \geq \epsilon_2 N_1$.
\end{proof}

\section{Disjointness Lemma}\label{app:Probabilistic}

Consider site percolation on $\mathbb{T}_K$ distributed according to the product
measure $\mu_{\theta}$ with

\begin{equation}
 \mu_{\theta} (\sigma_x = + 1) = \theta = 1 - \mu_{\theta} (\sigma_x = - 1),
   \forall x \in \mathbb{T}_K.
\end{equation}

\noindent
Let $T$ be a finite subtree of $\mathbb{T}_K$ with $2\leq N_1 \leq |T|$ of its vertices labeled
special, such that all the leaves are special. As in Lemma \ref{geometriclemma},
in the following lemma $\epsilon_1$ and $\epsilon_2$ are strictly positive,
finite and depend only on $K$. For the events $\tmop{Tree}^+ [v]$ and
$\tmop{Tree}^+ [v, w]$, see Definition \ref{K-1-arytrees}.

\begin{lem}
\label{disjointlemma}
  \tmtextbf{Disjoint events}

  For each such tree $T$, one or both of the following is valid:
  \begin{enumeratealpha}
    \item there are at least $\epsilon_1 N_1$ leaves $v$ in $T$, with the
    events $\{\tmop{Tree}^+ (v)\}$ mutually independent, and/or

    \item there are at least $\frac{1}{2} \epsilon_2 | \Lambda |$ edges
  having endpoints $v, w$ in $T$ with $v$ a good special 2-point, and the events
  $\{\tmop{Tree}^+ (v, w)\}_{v, w}$ mutually independent.
  \end{enumeratealpha}
\end{lem}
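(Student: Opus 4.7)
The plan is to reduce to the geometric dichotomy of Lemma~\ref{geometriclemma} and verify mutual independence in each case by exploiting the tree structure of $\mathbb{T}_K$. The key structural observation, which I invoke in both cases, is that each $\tmop{Tree}^+$ event depends only on spins in a specific ``branch'' subtree of $\mathbb{T}_K$ obtained by deleting one or two separating vertices of $T$; distinct branches are automatically disjoint whenever the deleted vertices separate the relevant pairs in $\mathbb{T}_K$.

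In case a), Lemma~\ref{geometriclemma} furnishes at least $\epsilon_1 N_1$ leaves of $T$, and by assumption every leaf of $T$ is special. For two distinct leaves $v_1, v_2$ with unique $T$-neighbors $v_1', v_2'$, the event $\tmop{Tree}^+[v_i]$ depends only on spins in $A_{v_i'}[v_i]$. Since $v_i'$ is $v_i$'s only $T$-neighbor, the unique $\mathbb{T}_K$-path from $v_1$ to any other vertex of $T$ passes through $v_1'$; a short path-uniqueness argument then yields $A_{v_1'}[v_1] \cap A_{v_2'}[v_2] = \emptyset$, delivering mutual independence directly.

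In case b), we are given at least $\epsilon_2 N_1$ special good $2$-points, and my plan is to pair each with a suitable $T$-neighbor. I would first decompose the $2$-points of $T$ into maximal chains of consecutive $2$-points $u_1 - u_2 - \cdots - u_k$; these chains are vertex-disjoint, and every good $2$-point is interior to its chain (i.e., one of $u_2,\ldots,u_{k-1}$). Within each chain I pair consecutive vertices as $\{u_1,u_2\},\{u_3,u_4\},\ldots$ If a chain contains $m$ special good $2$-points, each lies in exactly one pair and each pair contains at most two of them, so at least $m/2$ of the pairs are \emph{usable}, meaning they contain a special good $2$-point. Summing over chains produces at least $\tfrac{1}{2}\epsilon_2 N_1$ vertex-disjoint usable pairs, and for each such pair I orient $v$ to be the special good $2$-point and $w$ its chain-partner (which is automatically a $2$-point of $T$, as required by Definition~\ref{K-1-arytrees}).

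The step I expect to be the main obstacle is verifying mutual independence of the resulting $\tmop{Tree}^+[v,w]$ events. Each depends on spins in $A_{v',w}[v] \cup A_{v,w'}[w]$, a union of two branches of $\mathbb{T}_K$ hanging off $v$ and $w$ in directions excluding the $T$-vertices $v'$ and $w'$. The vertex-disjointness of the chosen pairs (within chains by construction, across chains automatic) ensures two distinct pairs share no vertex of $T$; a path-uniqueness argument in $\mathbb{T}_K$ analogous to case~a), now using the separating vertices $v',w$ on one side and $v,w'$ on the other, then shows the full branches are disjoint as subgraphs of $\mathbb{T}_K$. Taking $\epsilon_1,\epsilon_2$ as in Lemma~\ref{geometriclemma} completes the argument.
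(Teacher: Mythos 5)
Your proof is correct and follows essentially the same route as the paper's: case a) is immediate from Lemma~\ref{geometriclemma}.a together with the disjointness of the leaf branches $A_{v'}[v]$, and case b) organizes the good special 2-points into chains along $T$ and pairs adjacent 2-points to extract at least $\tfrac{1}{2}\epsilon_2 N_1$ vertex-disjoint pairs supporting mutually independent events $\tmop{Tree}^+[v,w]$. The only (harmless) difference is that you build maximal chains of \emph{all} 2-points and discard the pairs containing no special good 2-point, whereas the paper chains the good special 2-points themselves and, for odd-length chains, pairs the leftover vertex with a neighboring 2-point; both yield the same count and the same disjointness-of-branches independence argument.
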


\begin{proof}
Lemma \ref{disjointlemma}.a follows from Lemma \ref{geometriclemma}.a,
since for each of the
$\epsilon_1 N_1$ leaves of $T$ we can define an event $\tmop{Tree}^+ [v]$,
and these events depend on the spins of disjoint sets of vertices
and are therefore mutually independent.

Otherwise, by Lemma \ref{geometriclemma}.b there are at least
$N_3 = \epsilon_2 N_1$ good special 2-points in $T$. These are arranged into
$p\geq1$ nonempty maximal chains of adjacent vertices along $T$. We order the
chains and let $n_i$ denote the number of vertices in the i\textsuperscript{th} chain,
for $i=1, \ldots, p$; $n_1, \ldots, n_p \geq 1$ and $n_1 + \ldots + n_p = N_3$.
We also order the $N_3$ good special 2-points, $\{s_1, s_2, \ldots, s_{N_3}\}$,
so that they are consecutively ordered in each chain.

Suppose $n_i = 1$ for some $i$, and the good special 2-point in this chain is
$s_i^\ast$. Let $w_i$ be one of $s_i^\ast$'s neighbors in $T$ and consider the
event $\tmop{Tree}^+ [s_i^\ast, w_i]$. If $n_i = 2$, the i\textsuperscript{th} chain contains two
adjacent special points $\{s_i^\ast(1), s_i^\ast(2)\}$ and we consider the event
$\tmop{Tree}^+ [s_i^\ast(1), s_i^\ast(2)]$. Generally for the i\textsuperscript{th} chain,
we pair adjacent good special 2-points (other than the last if $n_i$ is odd) so
as to consider $\lfloor \frac{n_i + 1}{2} \rfloor$ events
$\tmop{Tree}^+ [s_i^\ast(j),s_i^\ast(j+1)]$, where the last event is
$\tmop{Tree}^+ [s_i^\ast(n_i),w_i]$ if $n_i$ is odd; these events involve
disjoint sets of vertices and are therefore independent. Thus in total we can construct

\begin{equation}
 \left\lfloor \frac{n_1 + 1}{2} \right\rfloor + \ldots +
 \left \lfloor \frac{n_p + 1}{2} \right \rfloor
 \geq \left\lfloor \frac{N_3}{2} \right\rfloor
\end{equation}

\noindent
mutually independent events.
\end{proof}

\end{appendices}

\textbf{Acknowledgments}: The research reported in this paper was supported in part by NSF
grants 0ISE-0730136 and DMS-1007524.
S.M.E. thanks the Institute of Mathematical Sciences at
NYU--Shanghai for support. The authors thank an anonymous referee for carefully reading the paper and making 
several useful suggestions. 

\section{References}

\end{document}